 \newtheoremstyle{mytheorem}
 {3pt}
 {3pt}
 {\slshape}
 {}
 {\bfseries}
 {.}
 { }
 {}
\numberwithin{equation}{section}
\theoremstyle{theorem}
\newtheorem{theorem}{Theorem}[section]
\newtheorem*{theorem*}{Theorem}
\newtheorem{lemma}[theorem]{Lemma}
\newtheorem{conjecture}{Conjecture}[section]
\providecommand{\customgenericname}{}
\newcommand{\newcustomtheorem}[2]{%
	\newenvironment{#1}[1]
	{%
		\renewcommand\customgenericname{#2}%
		\renewcommand\theinnercustomgeneric{##1}%
		\innercustomgeneric
	}
	{\endinnercustomgeneric}
}
\theoremstyle{definition}
\newtheorem*{example*}{Example}
\newtheorem*{examples*}{Examples}
\newtheorem{remark}{Remark}[section]
\newtheorem*{remark*}{Remark}
\newtheorem*{remarks*}{Remarks}
\newtheoremstyle{named}{}{}{\itshape}{}{\bfseries}{.}{.5em}{#1\thmnote{ #3}}
\theoremstyle{named}
\newcommand{\Keywords}[1]{\ifthenelse{\isempty{#1}}{}{\smallskip \smallskip \noindent \textbf{Keywords}. #1}}
\newcommand{\MSC}[2][2010]{\ifthenelse{\isempty{#2}}{}{\smallskip \smallskip \noindent \textbf{#1MSC}. #2}}
\newcommand{\abstractnote}[1]{\ifthenelse{\isempty{#1}}{}{\smallskip \smallskip \noindent \textsuperscript{\dag}#1}}
\def\specialsection{\@startsection{section}{1}%
  \z@{\linespacing\@plus\linespacing}{.5\linespacing}%
  {\normalfont}}
\def\section{\@startsection{section}{1}%
  \z@{.7\linespacing\@plus\linespacing}{.5\linespacing}%
  {\normalfont\scshape}}
\patchcmd{\@settitle}{\uppercasenonmath\@title}{\Large\boldmath}{}{}
\patchcmd{\@settitle}{\begin{center}}{\begin{flushleft}}{}{}
\patchcmd{\@settitle}{\end{center}}{\end{flushleft}}{}{}
\patchcmd{\@setauthors}{\MakeUppercase}{\normalsize}{}{}
\patchcmd{\@setauthors}{\centering}{\raggedright}{}{}
\patchcmd{\section}{\scshape}{\large\bfseries\boldmath}{}{}
\patchcmd{\subsection}{\bfseries}{\bfseries\boldmath}{}{}
\renewcommand{\@secnumfont}{\bfseries}
\patchcmd{\@startsection}{\@afterindenttrue}{\@afterindentfalse}{}{}
\patchcmd{\abstract}{\leftmargin3pc}{\leftmargin1pc}{}{}
\def\maketitle{\par
  \@topnum\z@ 
  \@setcopyright
  \thispagestyle{empty}
  \ifx\@empty\shortauthors \let\shortauthors\shorttitle
  \else \andify\shortauthors
  \fi
  \@maketitle@hook
  \begingroup
  \@maketitle
  \toks@\@xp{\shortauthors}\@temptokena\@xp{\shorttitle}%
  \toks4{\def\\{ \ignorespaces}}
  \edef\@tempa{%
    \@nx\markboth{\the\toks4
      \@nx\MakeUppercase{\the\toks@}}{\the\@temptokena}}%
  \@tempa
  \endgroup
  \c@footnote\z@
  \@cleartopmattertags
}
\newcommand{\zrie}[1]{\zeta\!\left(#1\right)}
\newcommand{\Arg}{\operatorname{Arg}}
\newcommand{\Res}{\operatorname{Res}}
\newcommand{\lcm}{\operatorname{lcm}}
\newcommand{\dg}[1]{\frac{\Gamma'}{\Gamma}\!\left(#1\right)}
\newcommand{\Main}{\mathfrak{M}}
\newcommand{\mfO}{\mathbb{O}}
\newcommand{\sR}{\mathscr{R}}
\newcommand{\sL}{\mathscr{L}}
\newcommand{\cR}{\mathcal{R}}
\title[Nonmodular infinite products]{Nonmodular infinite products and a conjecture of\\ Seo and Yee}
\author[S. Chern]{Shane Chern}
\address{Department of Mathematics and Statistics, Dalhousie University, Halifax, Nova Scotia, B3H 4R2, Canada}
\email{chenxiaohang92@gmail.com}
\date{}
\begin{document}

\maketitle

\begin{abstract}

We will tackle a conjecture of S.~Seo and A.~J.~Yee, which says that the series expansion of $1/(q,-q^3;q^4)_\infty$ has nonnegative coefficients. Our approach relies on an approximation of the generally nonmodular infinite product $1/(q^a;q^M)_\infty$, where $M$ is a positive integer and $a$ is any of $1,2,\ldots,M$.

\Keywords{Nonmodular infinite product, Seo--Yee Conjecture, asymptotic formula, circle method, inverse Mellin transform.}

\MSC{11P55, 11P82.}
\end{abstract}

\section{Introduction}

Throughout, we adopt the standard $q$-series notation:
\begin{align*}
(A;q)_\infty:=\prod_{k\ge 0} (1-A q^k)
\end{align*}
and
\begin{align*}
(A,B,\ldots,C;q)_\infty:=(A;q)_\infty(B;q)_\infty\cdots(C;q)_\infty.
\end{align*}

In their work on the index of seaweed algebras and integer partitions, Seo and Yee \cite{SY2019} proved that an earlier conjecture of Coll, A. Mayers and N. Mayers \cite{CMM2018} is equivalent to the following nonnegativity conjecture.

\begin{conjecture}[Seo--Yee]\label{conj-sy}
	The series expansion of
	\begin{equation}\label{eq:conj-sy}
	\frac{1}{(q,-q^3;q^4)_\infty}
	\end{equation}
	has nonnegative coefficients.
\end{conjecture}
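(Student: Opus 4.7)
The plan is to reduce Conjecture~\ref{conj-sy} to a positive-main-term-plus-small-error asymptotic followed by a finite numerical check. Applying the classical identity $(-q^a;q^M)_\infty = (q^{2a};q^{2M})_\infty/(q^a;q^M)_\infty$ to the factor $(-q^3;q^4)_\infty$, and then splitting $(q;q^4)_\infty = (q;q^8)_\infty(q^5;q^8)_\infty$ and $(q^3;q^4)_\infty = (q^3;q^8)_\infty(q^7;q^8)_\infty$, I first rewrite the generating function as
\[
\frac{1}{(q,-q^3;q^4)_\infty} \;=\; \frac{(q^3;q^8)_\infty\,(q^7;q^8)_\infty}{(q;q^8)_\infty\,(q^5;q^8)_\infty\,(q^6;q^8)_\infty},
\]
expressing it as a ratio of five factors of the form $(q^a;q^M)_\infty^{\pm 1}$ with $M=8$---exactly the class of nonmodular products whose asymptotics the paper sets out to develop.

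Next, I would invoke the paper's approximation of $\log(q^a;q^M)_\infty$. Writing $q=e^{-t}$, a Mellin--Barnes analysis of the representation $-\log(q^a;q^M)_\infty = \sum_{m\ge 1} e^{-amt}/[m(1-e^{-Mmt})]$, picking up the simple pole at $s=1$ and the double pole at $s=0$, produces
\[
-\log(q^a;q^M)_\infty \;=\; \frac{\pi^2}{6Mt} + \Bigl(\tfrac{a}{M} - \tfrac{1}{2}\Bigr)\log t + c_{a,M} + O(t^K)
\]
for arbitrary $K$. Summing this expansion across the five factors above, the $1/t$-coefficients combine (three positive, two negative) to $\pi^2/(48t)$, so the full generating function behaves like $\exp(\pi^2/(48t))$ modulo slower factors. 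A saddle-point integration on the Hardy--Ramanujan contour then upgrades this to
\[
[q^n]\,\frac{1}{(q,-q^3;q^4)_\infty} \;\sim\; C\,n^{-\alpha}\exp\!\bigl(\pi\sqrt{n/12}\bigr)
\]
for explicit $C>0$ and rational $\alpha$, furnishing a strictly positive dominant term $\Main(n)$.

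The main obstacle is error control. Because $(q^a;q^M)_\infty$ fails to be modular whenever $\gcd(a,M)\ne M$, the minor-arc contributions near roots of unity $\zeta\ne 1$---including $\zeta=-1$ and the primitive sixth roots, where the factors $1/(1+q^{3+4k})$ themselves become singular---do not simplify by a clean modular inversion; each must be handled separately, e.g.\ by shifted-contour estimates for $\log(\zeta q^a;q^M)_\infty$. The target is an \emph{effective} bound $|E(n)| \le \mfO(n)$ with $\mfO(n) = o(\Main(n))$ and numerically explicit constants, yielding a threshold $N_0$ above which $\Main(n) + E(n) > 0$. Crucially, cancellation between the two numerator factors and three denominator factors at each secondary peak has to be tracked peak-by-peak, because individual factors have secondary contributions comparable to their own main terms and only shrink after the ratio is formed. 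Once such a threshold $N_0$ is pinned down, the proof concludes with a direct symbolic computation of the first $N_0$ coefficients of $1/(q,-q^3;q^4)_\infty$ to verify nonnegativity in that range; the sharpness required of the error bounds is exactly what makes this last check computationally feasible.
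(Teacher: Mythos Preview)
Your plan is essentially the paper's own approach---circle method, with a Mellin analysis of each factor $(q^a;q^M)_\infty$ near every root of unity, explicit constants throughout, and a finite check to finish---but one concrete step would fail as written: you place $q=-1$ among the minor arcs. The paper computes (Section~5, Cases~1 and~2) that for both $k=1$ and $k=2$ the leading term of $\log G(e^{-\tau+2\pi i h/k})$ is $\pi^2/(48\tau)$, so $q=-1$ contributes with the \emph{same} exponential order $\exp(\pi\sqrt{n/12})$ as $q=1$. Any minor-arc bound of the shape $|G(q)|\le \exp\bigl((\pi^2/48-\delta)X\bigr)$ therefore fails at $q=-1$; you must promote it to a second major arc and compute its contribution precisely. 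In the paper this produces a second Bessel term in the asymptotic, carrying a factor $(-1)^n$ and a smaller power of $n$ in front (so positivity is not threatened), but it cannot be absorbed into a crude error. The primitive sixth roots you worry about, by contrast, are only singularities of individual factors $1/(1+q^{3+4k})$ and are genuinely subdominant for the full product.

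With that correction your outline matches the paper. One further point you should be aware of: the paper itself does not close the argument. Its effective threshold comes out to $n\ge 2.4\times 10^{14}$, and the author explicitly states that the finite verification of $g(n)\ge 0$ up to that bound was not carried out, so Conjecture~\ref{conj-sy} remains open there as well.
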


As a $q$-hypergeometric proof of this conjecture is notoriously difficult to find, one may hope to derive an asymptotic formula for the coefficients. If one is also patient enough to compute an explicit bound of the error term, then a direct examination will yield a proof of such nonnegativity. Note that \eqref{eq:conj-sy} is not modular; in particular, it cannot be written as a quotient of Dedekind eta functions or Jacobi theta functions. Thus, a Rademacher-type exact formula is out of reach. Also, if we rewrite this product as
$$\frac{(q^3;q^4)_\infty}{(q;q^4)_\infty (q^6;q^8)_\infty},$$
then the numerator $(q^3;q^4)_\infty$ prevents us using Meinardus' powerful approach \cite{Mei1954}. One of the few works on the asymptotics of nonmodular infinite products is due to Grosswald \cite{Gro1958}, who absorbed ideas from Lehner \cite{Leh1941} and Livingood \cite{Liv1945}. In his paper, the infinite product $1/(q^a;q^M)_\infty$ 
with a prime modulus $M$ is considered. However, a closer examination of Grosswald's paper reveals several mistakes.\footnote{For example, in the calculation of the residue $R_3$ on page 119 of \cite{Gro1958}, the following term is missing:
$$-\frac{1}{4\pi k^2}\sum_{\rho,\mu,\nu=1}^k \sin \frac{2\pi \rho \mu}{k} \cos \frac{2\pi \rho h' \nu}{k}\frac{2\zrie{2,\frac{\mu}{k}}\zrie{0,\frac{\nu}{k}}}{\pi z}.$$
In other words, in this $R_3$, there is also a term of the form $z^{-1}$ times an expression.} 
It is also a natural question to ask about the case where the modulus is composite.

Let $M$ be a positive integer and $a$ be any of $1,2,\ldots,M$. The first object of this paper is to investigate the asymptotic behavior of
\begin{equation}\label{eq:Phi-def-1}
\Phi_{a,M}(q):=\log\left(\frac{1}{(q^a;q^M)_\infty}\right)
\end{equation}
when the complex variable $q$ with $|q|<1$ approaches an arbitrary root of unity.

For this purpose, we begin with some standard notations. As usual, $\zeta(s)$ and $\zeta(s,\alpha)$ denote the Riemann and Hurwitz zeta functions, respectively. We write $\zeta'(s,\alpha)$ the partial derivative of Hurwitz zeta function with respect to $s$, namely,
$$\zeta'(s,\alpha):=\frac{\partial}{\partial s}\zeta(s,\alpha).$$
Meanwhile, $\Gamma(s)$ is the Gamma function and $\gamma$ is the Euler--Mascheroni constant. We shall also adopt the conventional Vinogradov notation $f(x)\ll g(x)$, which means that there exists an absolute constant $C$ such that $|f(x)|\le C g(x)$. If the constant $C$ depends on some variables, we attach a subscript and write $f(x)\ll_{\textsf{var}} g(x)$. Finally, we write the greatest common divisor of integers $m$ and $n$ as $(m,n)$.

\begin{theorem}\label{th:main-est}
	Let $X\ge 16$ be a sufficiently large positive number. Let
	\begin{equation}
	q:=e^{-\tau+\frac{2\pi i h}{k}},
	\end{equation}
	where $1\le h\le k\le \lfloor\sqrt{2\pi X}\rfloor=:N$ with $(h,k)=1$ and $\tau:=X^{-1}+2\pi i Y$ with $|Y|\le \frac{1}{kN}$. Let $M$ be a positive integer and $a$ be any of $1,2,\ldots,M$. If we denote by $b$ the unique integer between $1$ and $(k,M)$ such that $b\equiv -ha \pmod{(k,M)}$ and write
	$$b^*:=\begin{cases}
	(k,M)-b & \text{if $b\ne (k,M)$},\\
	(k,M) & \text{if $b= (k,M)$},
	\end{cases}$$
	then
	\begin{align}\label{eq:th-main}
	\Phi_{a,M}(q)&=\frac{1}{\tau}\frac{(k,M)^2}{k^2 M}\Bigg[\pi^2\left(\frac{ b^2}{(k,M)^2}-\frac{ b}{(k,M)}+\frac{1}{6}\right)\nonumber\\
	&\quad+2\pi i\left(-\zeta'\left(-1,\frac{b}{(k,M)}\right)+\zeta'\left(-1,\frac{b^*}{(k,M)}\right)\right)\Bigg]+E,
	\end{align}
	where
	\begin{equation}
	|\Re(E)|\ll_{M} X^{\frac{1}{2}}\log X.
	\end{equation}
\end{theorem}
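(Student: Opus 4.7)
The approach is a Mellin--Barnes analysis of $\Phi_{a,M}$, followed by contour shifting to isolate the main term as a residue. Starting from $\log(1/(q^a;q^M)_\infty)=\sum_{m\ge 1}q^{ma}/(m(1-q^{mM}))$, I would set $d=(k,M)$, $k'=k/d$, and $L=kM/d$. Splitting the index $n$ in $q^{a+nM}$ as $n=t+jk'$ with $0\le t\le k'-1$, $j\ge 0$, the phase $\phi_t:=e^{2\pi ih(a+tM)/k}$ depends only on $t$ (since $\omega^{jL}=1$). Inserting the Mellin identity $e^{-x}=(2\pi i)^{-1}\int_{(c)}\Gamma(s)x^{-s}\,ds$ (valid for $c>1$) then produces
$$\Phi_{a,M}(q)=\frac{1}{2\pi i}\int_{(c)}\Gamma(s)\,\tau^{-s}\,L^{-s}\sum_{t=0}^{k'-1}\zeta\!\left(s,\tfrac{a+tM}{L}\right)\mathrm{Li}_{s+1}(\phi_t)\,ds.$$
In the strip $\Re s>-1/2$, the integrand has a simple pole at $s=1$ (from $\zeta(s,\cdot)$) and a simple pole at $s=0$ (from $\Gamma$, doubled whenever some $\phi_t=1$).

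I would next shift the contour to $\Re s=-1/2$ and read off $\mathrm{Res}_{s=1}$ as the main term. Since $\mathrm{Res}_{s=1}\zeta(s,\cdot)=1$ and $(M/d,k/d)=1$ forces $\{\phi_t\}_{t=0}^{k'-1}=\{e^{2\pi iha/k}\zeta:\zeta^{k'}=1\}$, the polylogarithm distribution identity $\sum_{\zeta^n=1}\mathrm{Li}_2(z\zeta)=n^{-1}\mathrm{Li}_2(z^n)$ collapses the inner sum to
$$\sum_{t=0}^{k'-1}\mathrm{Li}_2(\phi_t)=\frac{1}{k'}\mathrm{Li}_2\!\left(e^{-2\pi ib/d}\right),$$
using the congruence $ha\equiv-b\pmod d$. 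Combined with $L^{-1}=d/(kM)$, this gives the prefactor $d^2/(k^2M\tau)$. The parenthesized expression in \eqref{eq:th-main} then emerges by separating $\mathrm{Li}_2(e^{-2\pi ib/d})$ into real and imaginary parts: the real part matches $\pi^2(b^2/d^2-b/d+1/6)$ by the Fourier series of $B_2(x)$ on $[0,1]$, while the imaginary part equals $2\pi(\zeta'(-1,b^*/d)-\zeta'(-1,b/d))$, obtained by differentiating the Hurwitz-type identity
$$\zeta(1-s,x)-\zeta(1-s,1-x)=\frac{4\Gamma(s)}{(2\pi)^s}\sin(\pi s/2)\sum_{n\ge 1}\frac{\sin(2\pi nx)}{n^s}$$
at $s=2$ (where $\sin\pi=0$ kills all terms except the one from differentiating the sine).

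The main obstacle is the error bound. The residue at $s=0$ is at most $O_{a,M}(\log X)$: in the degenerate subcase $\phi_{t_0}=1$, the double pole produces an extra $-\log\tau$ factor from $d\tau^{-s}/ds|_{s=0}$, and $|\log\tau|\le\log X+\pi/2$. The tail integral on $\Re s=-1/2$ is the delicate part: because $\tau=X^{-1}+2\pi iY$ with $|Y|\le 1/(kN)$ permits $\arg\tau$ to approach $\pi/2$, the Stirling decay $|\Gamma(-1/2+it)|\ll(1+|t|)^{-1}e^{-\pi|t|/2}$ is largely cancelled by $|\tau^{-s}|=|\tau|^{1/2}e^{-t\arg\tau}$ on the adverse side of the $t$-axis. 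To secure the bound $|\Re E|\ll_{a,M}X^{1/2}\log X$, I would work with the real part of the integrand so that pairing $s\leftrightarrow\bar s$ tames the residual exponentials; standard convexity bounds for $\zeta(s,\cdot)$ and $\mathrm{Li}_{s+1}(\phi_t)$ on the vertical line, combined with the Farey-arc spacing $\pi/2-|\arg\tau|\gg k/\sqrt X$ and the scaling $|\tau|\ll X^{-1/2}$, then close the estimate with room to spare.
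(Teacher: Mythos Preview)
Your extraction of the main term is correct and, via the polylogarithm distribution identity and the closed form for $\mathrm{Li}_2(e^{-2\pi ib/d})$, arguably cleaner than the paper's route through two Hurwitz zetas and the functional equation. The identification of the real and imaginary parts with $\pi^2 B_2(b/d)$ and $2\pi(\zeta'(-1,b^*/d)-\zeta'(-1,b/d))$ is right.

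The genuine gap is in the tail estimate. Shifting to $\Re s=-\tfrac12$ and bounding directly does not close. On that line $|\Gamma(s)|\asymp|t|^{-1}e^{-\pi|t|/2}$, while $|\tau^{-s}|=|\tau|^{1/2}e^{t\arg\tau}$; on the adverse half-axis the net decay is $e^{-|t|(\pi/2-|\arg\tau|)}$ with $\pi/2-|\arg\tau|\asymp k/\sqrt{X}$ in the worst Farey case. Meanwhile $\zeta(-\tfrac12+it,\alpha)\ll|t|^{3/2}$ and, after writing $\mathrm{Li}_{1/2+it}(\phi_t)$ as a $k$-term combination of Hurwitz zetas on the critical line, the polylog contributes another positive power of $|t|$ together with a factor growing with $k$. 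Integrating $|t|^{c}e^{-|t|\,k/\sqrt X}$ produces $(\sqrt X/k)^{c+1}$ with $c>0$, and after inserting $L^{1/2}$, $|\tau|^{1/2}$ and the sum over $t\in\{0,\dots,k'-1\}$ the resulting bound exceeds $X^{1/2}\log X$. Your appeal to ``pairing $s\leftrightarrow\bar s$'' does not produce cancellation here: the integrand is not conjugate-symmetric on the line (the factors $\tau^{-s}$ and $\mathrm{Li}_{s+1}(\phi_t)$ break it), so taking the real part still leaves you with the same adverse exponential.

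What the paper does instead is the key missing idea. It applies the Hurwitz functional equation \emph{before} shifting, which converts the problematic integral into one that can be recognised as (pieces of) an auxiliary series
\[
\Psi_{a,M}(q^*)=\log\prod_{m\equiv -ha\ (\mathrm{mod}\ d)}\frac{1}{1-e^{2\pi i\alpha a/M}(q^*)^m},\qquad q^*=\exp\!\Big(\tfrac{2\pi i\beta h'}{k}-\tfrac{2\pi}{K z}\Big),
\]
with $|q^*|\le\exp(-c\,d/M)$ bounded away from $1$ uniformly because $\Re(1/\tau)\gg k^2$. This is a modular-transformation substitute: the integral becomes a rapidly convergent product rather than something to be bounded by Stirling and convexity. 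The residual cross-terms are handled by pairing $1/\cos(\pi s/2)\pm 1/(i\sin(\pi s/2))$ so that only the \emph{favourable} exponential survives (the paper's Lemma on $\mathcal I_\pm$), which is exactly the cancellation your sketch is groping for but does not locate. Without this device, the tail on $\Re s=-\tfrac12$ is genuinely too large.
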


\begin{remark}
	One may explicitly compute an upper bound for $|\Re(E)|$ by means of the estimations in \eqref{eq:bound-integral}, \eqref{eq:R12}, \eqref{eq:R13}, \eqref{eq:R14}, \eqref{eq:R21}, \eqref{eq:R22}, \eqref{eq:R23}, \eqref{eq:R3} and \eqref{eq:R42}. It should be noted that in some of these estimations, there is an expression $(k,M)$. However, we trivially have $1\le (k,M)\le M$, so the asymptotic relation $|\Re(E)|\ll X^{\frac{1}{2}}\log X$ can be expressed in a way that depends only on the additional parameter $M$.
\end{remark}

\begin{remark}\label{rmk:Q_h/k}
	Let $\mathcal{Q}_{h/k}$ be the set of $q$ with respect to $h/k$ as defined in Theorem \ref{th:main-est}, that is,
	$$\mathcal{Q}_{h/k}:=\big\{e^{-\frac{1}{X}+2\pi i(\frac{h}{k}-Y)}\,:\, |Y|\le \tfrac{1}{kN}\big\}.$$
	For any $q$ with $|q|=e^{-\frac{1}{X}}$, we are always able to find a certain $h/k$ such that $q\in \mathcal{Q}_{h/k}$. This is a direct consequence of Dirichlet's approximation theorem, asserting that $\mathbb{R}/\mathbb{Z}$ can be covered by intervals
	$$\bigcup_{\substack{1\le h\le k\le N\\ (h,k)=1}}\left[\frac{h}{k}-\frac{1}{kN},\frac{h}{k}+\frac{1}{kN}\right].$$
\end{remark}

Equipped with Theorem \ref{th:main-est}, we prove Conjecture \ref{conj-sy} for large enough $n$.

\begin{theorem}\label{th:g(n)-asymp}
	Let
	\begin{align}\label{eq:G(q)-1}
	G(q):=\sum_{n\ge 0}g(n)q^n = \frac{1}{(q,-q^3;q^4)_\infty}.
	\end{align}
	We have, as $n\to\infty$,
	\begin{equation}\label{eq:g(n)-asymp}
	g(n)\sim \frac{\pi^{\frac{1}{4}}\Gamma(\tfrac{1}{4})}{2^{\frac{9}{4}}3^{\frac{3}{8}}n^{\frac{3}{8}}}I_{-\frac{3}{4}}\left(\frac{\pi}{2}\sqrt{\frac{n}{3}}\right)+(-1)^n\frac{\pi^{\frac{3}{4}}\Gamma(\tfrac{3}{4})}{2^{\frac{11}{4}}3^{\frac{5}{8}}n^{\frac{5}{8}}}I_{-\frac{5}{4}}\left(\frac{\pi}{2}\sqrt{\frac{n}{3}}\right),
	\end{equation}
	where $I_s(x)$ is the modified Bessel function of the first kind. Further, when $n\ge2.4\times 10^{14}$, we have $g(n)>0$.
\end{theorem}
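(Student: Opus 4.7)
The plan is a Hardy--Ramanujan-type circle method argument that uses Theorem \ref{th:main-est} as the local analysis on every Farey arc. I would first recast the generating function: from the identity $(-q^3;q^4)_\infty(q^3;q^4)_\infty=(q^6;q^8)_\infty$ one obtains
\begin{equation*}
\log G(q)=\Phi_{1,4}(q)-\Phi_{3,4}(q)+\Phi_{6,8}(q),
\end{equation*}
so Cauchy's integral formula writes $g(n)=\frac{1}{2\pi i}\oint_{|q|=e^{-1/X}}q^{-n-1}G(q)\,dq$ as a sum of arc integrals over the Farey dissection $\{\mathcal{Q}_{h/k}\}$ of Remark \ref{rmk:Q_h/k}, with $X\asymp\sqrt n$ to place the saddle point on the contour.

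Applying Theorem \ref{th:main-est} to each of the three $\Phi$'s on every arc, a case analysis in terms of $(k,M)$, $b$ and $b^\ast$ shows that the real part of the leading $1/\tau$ contribution to $\log G$ is bounded above by $\pi^2/(48\tau)$, with equality attained only at $h/k\in\{0/1,1/2\}$; in both of those cases the three contributions sum to exactly $\pi^2/(48\tau)$. Combined with $|\Re(E)|\ll X^{1/2}\log X$, every arc with $k\geq 3$ contributes at most $O\!\bigl(e^{c\sqrt n}\bigr)$ for some effective $c<\pi/(2\sqrt 3)$. On $\mathcal{Q}_{1/2}$ the substitution $q=-e^{-\tau}$ turns the $q^{-n-1}$ factor into $(-1)^{n+1}e^{(n+1)\tau}$, which is the source of the $(-1)^n$ in \eqref{eq:g(n)-asymp}.

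To extract the Bessel constants at the two principal arcs I would sharpen the local expansion by one further order. The Mellin transform
\begin{equation*}
\int_0^\infty\tau^{s-1}\Phi_{a,M}(e^{-\tau})\,d\tau=\Gamma(s)\zeta(s+1)M^{-s}\zeta(s,a/M),
\end{equation*}
combined with a residue computation at $s=1$ and at the double pole $s=0$, yields
\begin{equation*}
\Phi_{a,M}(e^{-\tau})=\frac{\pi^2}{6M\tau}+\Bigl(\frac{a}{M}-\frac{1}{2}\Bigr)\log(M\tau)+\log\Gamma(a/M)-\tfrac{1}{2}\log(2\pi)+O(\tau),
\end{equation*}
and a parallel Mellin computation for $\Phi_{a,M}(-e^{-\tau})$ handles $h/k=1/2$. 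Summing yields a $-\tfrac{1}{4}\log\tau$ term in $\log G$ at $0/1$ and a $+\tfrac{1}{4}\log\tau$ term at $1/2$, so $G$ locally behaves like $C_0\,\tau^{-1/4}e^{\pi^2/(48\tau)}$ and $C_1\,\tau^{+1/4}e^{\pi^2/(48\tau)}$ respectively, with explicit constants $C_0,C_1$ involving $\Gamma(1/4)$ and $\Gamma(3/4)$. Extending each principal arc to a full Hankel contour and applying
\begin{equation*}
\frac{1}{2\pi i}\int_{\mathrm{Hankel}}e^{\alpha/\tau+n\tau}\tau^{-\mu}\,d\tau=(n/\alpha)^{(\mu-1)/2}I_{\mu-1}\!\bigl(2\sqrt{\alpha n}\bigr)
\end{equation*}
with $\alpha=\pi^2/48$ and $\mu\in\{1/4,-1/4\}$ produces exactly the $I_{-3/4}$ and $I_{-5/4}$ contributions in \eqref{eq:g(n)-asymp} with the claimed prefactors.

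The final step is the effective positivity threshold $n\geq 2.4\times10^{14}$. Using $I_s(x)\sim e^x/\sqrt{2\pi x}$, the main Bessel term grows like $n^{-5/8}e^{\pi\sqrt{n/3}/2}$, so positivity will follow as soon as the total subdominant-arc contribution plus the Hankel tails is strictly smaller. The main obstacle will be converting the qualitative $O(X^{1/2}\log X)$ error of Theorem \ref{th:main-est} into numerically effective bounds, uniformly controlling every implied constant, and in particular handling the arcs with $k=3$ and $k=4$ whose exponential rate is closest to $\pi/(2\sqrt 3)$; this detailed numerical bookkeeping is precisely what pins down the explicit value $2.4\times10^{14}$.
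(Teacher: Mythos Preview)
Your proposal is correct and follows essentially the same route as the paper: decompose $\log G=\Phi_{1,4}-\Phi_{3,4}+\Phi_{6,8}$, use Theorem~\ref{th:main-est} with a case analysis modulo $8$ to show the dominant arcs are at $q=\pm1$ with leading term $\pi^2/(48\tau)$, sharpen the major-arc expansion by a direct Mellin computation to pick up the $\mp\tfrac14\log\tau$ and $\Gamma(1/4),\Gamma(3/4)$ constants, and then extend to a Hankel contour to produce the Bessel terms. The only organizational difference is that the paper does not carry a full Farey dissection into the final integration: instead it packages the minor-arc case analysis into a single uniform bound $|G(q)|\le\exp\bigl((\pi^2/48-1/100)X\bigr)$ valid for $|Y|\ge 1/(2\pi X)$ (Theorem~\ref{th:bound-away}), splits the circle into just three intervals $I_1,I_2,I_3$ of width $1/(\pi X)$ rather than the Farey width $\asymp 1/(kN)$, and tracks all implied constants explicitly throughout Sections~3--5 so that the threshold $2.4\times10^{14}$ falls out of a direct numerical check rather than a separate ``conversion'' step.
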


\begin{remark}
	After this paper was finished, William Craig \cite{Cra2021} offered a full proof of the Seo--Yee Conjecture via a different approach with recourse to the Euler--Maclaurin summations.
\end{remark}

This paper is organized as follows. In Section \ref{sec:prep}, we make necessary preparations for the proof of Theorem \ref{th:main-est}. More precisely, we rewrite $\Phi_{a,M}(q)$ by means of the inverse Mellin transform so that we are led to the evaluations of certain integrals and residues, which will be done in Sections \ref{sec:integrals} and \ref{sec:res}, respectively. Then we move to the second object, namely, the Seo--Yee Conjecture. In Section \ref{sec:G-minor}, we apply Theorem \ref{th:main-est} to establish explicit bounds for $G(q)$ on the minor arcs. Further, Section \ref{sec:G-major} is devoted to precise approximations of $G(q)$ near the dominant poles. Finally, we prove Theorem \ref{th:g(n)-asymp} is Section \ref{sec:cir}.

\section{Preparations}\label{sec:prep}

Notice that
\begin{equation}\label{eq:Phi-def}
\Phi_{a,M}(q)=\log\left(\frac{1}{(q^a;q^M)_\infty}\right)=\sum_{\substack{m\ge 1\\ m\equiv a \bmod{M}}}\sum_{\ell\ge 1}\frac{q^{\ell m}}{\ell}.
\end{equation}
Recall also that $X\ge 16$ and $N=\lfloor\sqrt{2\pi X}\rfloor$. As in Theorem \ref{th:main-est}, we put
\begin{equation}
q=e^{-\tau+\frac{2\pi i h}{k}},
\end{equation}
where $1\le h\le k\le N$ with $(h,k)=1$ and
\begin{equation}
\tau=X^{-1}+2\pi i Y
\end{equation}
with the restriction
\begin{equation}
|Y|\le \frac{1}{kN}.
\end{equation}

\subsection{Necessary bounds}

Now we are going to collect some bounds that will be frequently used in the sequel. First, the assumptions of $X$ and $N$ imply that
\begin{equation}
0.9\sqrt{2\pi X}\le N\le \sqrt{2\pi X}.
\end{equation}
Further, it follows from $N\le \sqrt{2\pi X}$ that
$$\frac{1}{X}\le \frac{2\pi}{N^2}\le \frac{2\pi}{kN}.$$
Therefore,
\begin{equation}\label{tau-upper-bound}
|\tau|\le \frac{2\sqrt{2}\pi}{kN}.
\end{equation}
Finally, we have
\begin{equation}\label{eq:1/k tau-sq}
\Re\left(\frac{1}{\tau}\right)\ge 0.07k^2.
\end{equation}
This is because
\begin{align*}
\Re\left(\frac{1}{k^2\tau}\right)&=\frac{X^{-1}}{k^2(X^{-2}+4\pi^2 Y^2)}\\
&\ge \frac{X^{-1}}{k^2(X^{-2}+4\pi^2 k^{-2}N^{-2})}\\
&\ge \frac{X^{-1}}{N^2 X^{-2}+4\pi^2N^{-2}}\\
&\ge \frac{X^{-1}}{(0.9\sqrt{2\pi X})^2 X^{-2}+4\pi^2(0.9\sqrt{2\pi X})^{-2}}\\[6pt]
&\ge 0.07.
\end{align*}

\subsection{Applying the inverse Mellin transform}

Given any positive integer $k$, we define
\begin{equation}\label{eq:K-def}
K:=k\frac{M}{(k,M)}.
\end{equation}
Notice that $M\mid K$. Write in \eqref{eq:Phi-def}
$$\ell=rk+\mu \qquad(1\le \mu\le k)$$
and
$$m=tK+\lambda\qquad(1\le \lambda\le K,\ \lambda\equiv a\bmod{M}).$$
Then
$$\Phi_{a,M}(q)=\sum_{\substack{1\le \lambda\le K\\ \lambda\equiv  a\bmod{M}}}\sum_{1\le \mu\le k}e^{\frac{2\pi i h \mu \lambda}{k}}\sum_{r,t\ge 0}\frac{1}{rk+\mu}e^{-(rk+\mu)(tK+\lambda)\tau}.$$
Applying the inverse Mellin transform further gives
\begin{align*}
\Phi_{a,M}(q)&=\sum_{\substack{1\le \lambda\le K\\ \lambda\equiv a\bmod{M}}}\sum_{1\le \mu\le k}e^{\frac{2\pi i h \mu \lambda}{k}}\sum_{r,t\ge 0}\frac{1}{2\pi i}\int_{(\tfrac{3}{2})}\frac{\Gamma(s)}{rk+\mu}\frac{ds}{(rk+\mu)^s(tK+\lambda)^s\tau^s}\\
&=\sum_{\substack{1\le \lambda\le K\\ \lambda\equiv a\bmod{M}}}\sum_{1\le \mu\le k}e^{\frac{2\pi i h \mu \lambda}{k}}\frac{1}{2\pi i}\int_{(\tfrac{3}{2})}\frac{\Gamma(s)}{\tau^s k^{s+1}K^s}\zrie{s,\frac{\lambda}{K}}\zrie{1+s,\frac{\mu}{k}}ds.
\end{align*}
Here the path of integration $(\alpha)$ is from $\alpha-i\infty$ to $\alpha+i\infty$.

Recall the functional equation of Hurwitz zeta function \cite[p.~261, Theorem 12.8]{Apo1976}:
\begin{align}\label{eq:Hur-fnc}
\zrie{s,\frac{\lambda}{\kappa}}&=2\Gamma(1-s)(2\pi \kappa)^{s-1}\Bigg(\sin\frac{\pi s}{2}\sum_{1\le \nu\le \kappa}\cos\frac{2\pi \lambda\nu}{\kappa}\;\zrie{1-s,\frac{\nu}{\kappa}}\nonumber\\
&\quad+\cos\frac{\pi s}{2}\sum_{1\le \nu\le \kappa}\sin\frac{2\pi \lambda\nu}{\kappa}\;\zrie{1-s,\frac{\nu}{\kappa}}\Bigg),
\end{align}
which is valid for $1\le \lambda\le \kappa$. Let us further put
\begin{equation}
z:=\frac{\tau k}{2\pi}.
\end{equation}
Now rewriting
\begin{align*}
	e^{\frac{2\pi i h \mu \lambda}{k}} = \cos \frac{2\pi h \mu \lambda}{k} + i \sin \frac{2\pi h \mu \lambda}{k}
\end{align*}
and applying \eqref{eq:Hur-fnc} to $\zrie{s,\frac{\lambda}{K}}$ with the use of Euler's reflection formula for the Gamma function \cite[p.~250, eq.~(3)]{Apo1976}:
\begin{align*}
	\Gamma(s)\Gamma(1-s) = \frac{\pi}{\sin (\pi s)} = \frac{\pi}{2\sin \frac{\pi s}{2} \cos \frac{\pi s}{2}},
\end{align*}
we derive that
\begin{align}\label{eq:Phi(q)-new-1}
	&\Phi_{a,M}(q)\notag\\
	&=\frac{1}{4\pi i k K} \sum_{\substack{1\le \lambda\le K\\ \lambda\equiv  a\bmod{M}}}\sum_{\substack{1\le\mu\le k\\1\le\nu\le K}} \cos\frac{2\pi h \mu \lambda}{k}\cos\frac{2\pi \nu \lambda}{K}\int_{(\tfrac{3}{2})}\frac{\zrie{1+s,\frac{\mu}{k}}\zrie{1-s,\frac{\nu}{K}}}{z^s \cos\frac{\pi s}{2}}ds\nonumber\\
	&\quad+\frac{1}{4\pi i k K} \sum_{\substack{1\le \lambda\le K\\ \lambda\equiv  a\bmod{M}}}\sum_{\substack{1\le\mu\le k\\1\le\nu\le K}} \cos\frac{2\pi h \mu \lambda}{k}\sin\frac{2\pi \nu \lambda}{K}\int_{(\tfrac{3}{2})}\frac{\zrie{1+s,\frac{\mu}{k}}\zrie{1-s,\frac{\nu}{K}}}{z^s \sin\frac{\pi s}{2}}ds\nonumber\\
	&\quad+\frac{1}{4\pi k K} \sum_{\substack{1\le \lambda\le K\\ \lambda\equiv  a\bmod{M}}}\sum_{\substack{1\le\mu\le k\\1\le\nu\le K}} \sin\frac{2\pi h \mu \lambda}{k}\sin\frac{2\pi \nu \lambda}{K}\int_{(\tfrac{3}{2})}\frac{\zrie{1+s,\frac{\mu}{k}}\zrie{1-s,\frac{\nu}{K}}}{z^s \sin\frac{\pi s}{2}}ds\nonumber\\
	&\quad+\frac{1}{4\pi k K} \sum_{\substack{1\le \lambda\le K\\ \lambda\equiv  a\bmod{M}}}\sum_{\substack{1\le\mu\le k\\1\le\nu\le K}} \sin\frac{2\pi h \mu \lambda}{k}\cos\frac{2\pi \nu \lambda}{K}\int_{(\tfrac{3}{2})}\frac{\zrie{1+s,\frac{\mu}{k}}\zrie{1-s,\frac{\nu}{K}}}{z^s \cos\frac{\pi s}{2}}ds.
\end{align}

\subsection{Changing the summation indices}

As in \cite[pp.~116--117]{Gro1958}, we shall then shift the path of integration back to $(-\tfrac{3}{2})$. However, before doing so, we first want to bijectively map the index $\lambda$ with $1\le \lambda\le K$ and $\lambda\equiv a \pmod{M}$ to a certain new index $\rho$. Such a change of summation indices is simply for computational convenience in the sequel. Also, the construction of the auxiliary function in \eqref{eq:Psi-def} is motivated from here.

To begin with, we define
\begin{align*}
	\sL&:=\big\{\lambda\,:\, \text{$1\le \lambda\le K$ and $\lambda\equiv a \bmod{M}$}\big\},\\
	\sR&:=\big\{\rho\,:\, \text{$1\le \rho\le k$ and $\rho\equiv -ha \bmod{M^*}$}\big\},
\end{align*}
where we put
$$M^*:=(k,M).$$
Keeping in mind that $M^*\mid k$ from the above and that $K=\frac{kM}{M^*}$ from \eqref{eq:K-def} so that $M\mid K$, we see that $\sL$ and $\sR$ are of equal cardinality $\frac{K}{M}=\frac{k}{M^*}$.

Now our object is to construct a one-to-one map from $\sL$ to $\sR$. For each $\lambda\in\sL$, we denote by $\rho=\rho(\lambda)$ the unique integer between $1$ and $k$ such that
\begin{equation}\label{eq:rho-def}
	\rho\equiv -h\lambda \pmod{k}.
\end{equation}
Recall that $\lambda\equiv a \pmod{M}$. Since $M^*\mid k$ and $M^*\mid M$, we have
\begin{equation}\label{eq:rho-M*}
	\rho\equiv -ha \pmod{M^*},
\end{equation}
thereby confirming that $\rho\in \sR$. To show that this map is injective, we choose $\lambda_1,\lambda_2\in \sL$. If $\rho(\lambda_1)=\rho(\lambda_2)$, then $-h \lambda_1\equiv -h \lambda_2\pmod{k}$ so that $\lambda_1\equiv \lambda_2\pmod{k}$ as $(h,k)=1$. Notice that $\lambda_1\equiv \lambda_2 \equiv a \pmod{M}$ implies that $\lambda_1$ is congruent to $\lambda_2$ modulo $\lcm(k,M)=\frac{kM}{(k,M)}=K$ by the Chinese remainder theorem. Now it follows from the definition of $\sL$ that $\lambda_1=\lambda_2$, as required.

For the inverse map, we first choose $h'$ so that
$$hh'\equiv -1 \pmod{k}.$$
This is always possible since $(h,k)=1$. Given any $\rho\in\sR$, consider the system
\begin{equation}
	\begin{cases}
		\lambda \equiv h'\rho & \pmod{k},\\
		\lambda\equiv a & \pmod{M},
	\end{cases}
\end{equation}
which is solvable whenever $h'\rho \equiv a \pmod{M^*}$. But this can be ensured by \eqref{eq:rho-M*} and the fact that $hh'\equiv -1 \pmod{M^*}$. By the Chinese remainder theorem, the solution is unique modulo $\lcm(k,M)=K$, and hence we get a unique $\lambda\in \sL$. To see that this $\lambda$ is indeed the preimage of $\rho$, we simply notice that
$$-h\lambda\equiv -h h'\rho\equiv \rho \pmod{k},$$
thereby satisfying \eqref{eq:rho-def}.
To explicitly express the residue class of $\lambda$ modulo $K$ for each $\rho\in \sR$, we find, using Euclid's algorithm, integers $\alpha$ and $\beta$ such that
\begin{equation}\label{eq:alpha&beta}
	\alpha k+\beta M=M^*.
\end{equation}
Therefore,
\begin{equation}\label{eq:lambda-K}
	\lambda\equiv a+\beta M \frac{h'\rho-a}{M^*}=\beta h'\frac{M}{M^*} \rho+\alpha a\frac{k}{M^*} \pmod{K}.
\end{equation}

From the above discussions, we may freely change summations over $\lambda\in \sL$ to summations over $\rho\in \sR$ with the same summand. Usually, the summand contains both $\lambda$ and $\rho$ but it should be kept in mind that they are \emph{dependent} to one another by the above bijective map. In other words,
\begin{align}\label{eq:sum=sum}
	\sum_{\substack{1\le \lambda\le K\\ \lambda\equiv  a\bmod{M}}} \textsf{(summand)} = \sum_{\substack{1\le \rho\le k\\ \rho\equiv b\bmod{M^*}}} \textsf{(summand)},
\end{align}
where we recall from Theorem \ref{th:main-est} that $b$ is defined as the unique integer between $1$ and $M^*$ such that
\begin{equation}
	b\equiv -ha \pmod{M^*}.
\end{equation}

\begin{remark}\label{rmk:3-cases}
	Notice that in the case where $a\equiv 0 \pmod{M}$, we may take $\lambda=K$. It is then true that $b\equiv -h\cdot 0\equiv 0 \pmod{M^*}$ and that the corresponding $\rho$ of $\lambda=K$ is such that $\rho\equiv -hK\equiv 0 \pmod{k}$ so that $\rho=k$. However, the opposite is not necessarily valid. In other words, when $b\equiv 0 \pmod{M^*}$, we may take $\rho=k$. But it is not necessarily true that $a\equiv 0 \pmod{M}$. Also, the corresponding $\lambda$ of $\rho=k$ is such that
	\begin{align*}
		\begin{cases}
			\lambda \equiv 0 & \pmod{k},\\
			\lambda\equiv a & \pmod{M}.
		\end{cases}
	\end{align*}
	So when $a\not\equiv 0 \pmod{M}$, we find that $\lambda\not\equiv 0 \pmod{K}$ and hence that $\lambda\ne K$, while when $a\equiv 0 \pmod{M}$, we have $\lambda\equiv 0 \pmod{K}$, yielding that $\lambda= K$. To summarize the above discussion, we have three cases.
	\begin{itemize}[leftmargin=*,align=left]
		\renewcommand{\labelitemi}{\scriptsize$\blacktriangleright$}
		
		\item
		\textit{Case 1.}~$a\not\equiv 0 \pmod{M}$ and $b\not\equiv 0 \pmod{M^*}$: In this case, $\lambda$ cannot take the value $K$ and $\rho$ cannot take the value $k$.
		
		\item
		\textit{Case 2.}~$a\not\equiv 0 \pmod{M}$ and $b\equiv 0 \pmod{M^*}$: In this case, $\lambda$ cannot take the value $K$ but $\rho$ may take the value $k$. In particular, the corresponding $\lambda$ of $\rho=k$ is a multiple of $k$.
		
		\item
		\textit{Case 3.}~$a\equiv 0 \pmod{M}$ and $b\equiv 0 \pmod{M^*}$: In this case, $\lambda$ may take the value $K$ and $\rho$ may take the value $k$. In particular, the corresponding $\lambda$ of $\rho=k$ is $K$ and vice versa.
	\end{itemize}
\end{remark}

\subsection{Shifting the path of integration}

In \eqref{eq:Phi(q)-new-1}, replacing $s$ by $-s$, reversing the direction of the integration path and shifting the path back to $(\tfrac{3}{2})$, one has, with $h\lambda$ replaced by $-\rho$ in light of \eqref{eq:rho-def},
\begin{align}\label{eq:Phi(q)-new-2}
	&\Phi_{a,M}(q)\notag\\
	&=\frac{1}{4\pi i k K} \sum_{\substack{1\le \lambda\le K\\ \lambda\equiv  a\bmod{M}}}\sum_{\substack{1\le\mu\le k\\1\le\nu\le K}} \cos\frac{2\pi \mu\rho}{k}\cos\frac{2\pi \nu \lambda}{K}\int_{(\tfrac{3}{2})}\frac{\zrie{1-s,\frac{\mu}{k}}\zrie{1+s,\frac{\nu}{K}}}{z^{-s} \cos\frac{\pi s}{2}}ds\nonumber\\
	&\quad-\frac{1}{4\pi i k K} \sum_{\substack{1\le \lambda\le K\\ \lambda\equiv  a\bmod{M}}}\sum_{\substack{1\le\mu\le k\\1\le\nu\le K}} \cos\frac{2\pi \mu\rho}{k}\sin\frac{2\pi \nu \lambda}{K}\int_{(\tfrac{3}{2})}\frac{\zrie{1-s,\frac{\mu}{k}}\zrie{1+s,\frac{\nu}{K}}}{z^{-s} \sin\frac{\pi s}{2}}ds\nonumber\\
	&\quad+\frac{1}{4\pi k K} \sum_{\substack{1\le \lambda\le K\\ \lambda\equiv  a\bmod{M}}}\sum_{\substack{1\le\mu\le k\\1\le\nu\le K}} \sin\frac{2\pi \mu\rho}{k}\sin\frac{2\pi \nu \lambda}{K}\int_{(\tfrac{3}{2})}\frac{\zrie{1-s,\frac{\mu}{k}}\zrie{1+s,\frac{\nu}{K}}}{z^{-s} \sin\frac{\pi s}{2}}ds\nonumber\\
	&\quad-\frac{1}{4\pi k K} \sum_{\substack{1\le \lambda\le K\\ \lambda\equiv  a\bmod{M}}}\sum_{\substack{1\le\mu\le k\\1\le\nu\le K}} \sin\frac{2\pi \mu\rho}{k}\cos\frac{2\pi \nu \lambda}{K}\int_{(\tfrac{3}{2})}\frac{\zrie{1-s,\frac{\mu}{k}}\zrie{1+s,\frac{\nu}{K}}}{z^{-s} \cos\frac{\pi s}{2}}ds\nonumber\\
	&\quad-2\pi i(R_1+R_2+R_3+R_4)\nonumber\\
	&=: \Upsilon_1+\Upsilon_2+\Upsilon_3+\Upsilon_4-2\pi i(R_1+R_2+R_3+R_4),
\end{align}
where the $R_j$'s come from the sum of residues of the corresponding integrand inside the strip $-\frac{3}{2}<\Re(s)<\frac{3}{2}$.

\begin{remark}
	Here we essentially shift the path of integration from $(\tfrac{3}{2})$ to $(-\tfrac{3}{2})$. As pointed out by one of the referees, it would be interesting to investigate what happens if one pulls back the contour further to $(-\tfrac{5}{2})$, etc.
\end{remark}

\subsection{Proof of Theorem \ref{th:main-est}}

To prove Theorem \ref{th:main-est}, it remains to study the integrals $\Upsilon_j$ and the residues $R_j$, and we will do so in the next two sections. In particular, the main term in \eqref{eq:th-main} comes from $R_1$ and $R_4$; see \eqref{eq:R11} and \eqref{eq:R41}. For the error term, we combine the estimations in \eqref{eq:bound-integral}, \eqref{eq:R12}, \eqref{eq:R13}, \eqref{eq:R14}, \eqref{eq:R21}, \eqref{eq:R22}, \eqref{eq:R23}, \eqref{eq:R3} and \eqref{eq:R42}.

\section{Integrals}\label{sec:integrals}

\subsection{An auxiliary function}

Let us define an auxiliary function
\begin{align}\label{eq:Psi-def}
	\Psi_{a,M}(q):=\log\left(\prod_{\substack{m\ge 1\\m\equiv -ha\bmod{M^*}}}\frac{1}{1-e^{\frac{2\pi i \alpha a}{M}}q^{m}}\right),
\end{align}
where $\alpha$ is as in \eqref{eq:alpha&beta}. We further write
$$m=rk+\rho\qquad(1\le \rho\le k,\ \rho\equiv -ha\bmod{M^*}),$$
and put
\begin{equation}\label{eq:q^*}
	q^*:=\exp\left(\frac{2\pi i\beta h'}{k}-\frac{2\pi}{K z}\right),
\end{equation}
where $\beta$ is again as in \eqref{eq:alpha&beta}. Then
$$\Psi_{a,M}(q^*)=-\sum_{\substack{1\le \rho\le k\\ \rho\equiv -ha\bmod{M^*}}}\sum_{r\ge 0}\log\Bigg(1-\exp\bigg(\frac{2\pi i\beta h'}{k}\rho-\frac{2\pi}{K z}(rk+\rho)+\frac{2\pi i \alpha a}{M}\bigg)\Bigg).$$
It follows from \eqref{eq:lambda-K} that
$$\exp\left(\frac{2\pi i \lambda}{K}\right)=\exp\left(\frac{2\pi i \beta h'M}{K M^*} \rho+\frac{2\pi i \alpha a k}{K M^*}\right)=\exp\left(\frac{2\pi i \beta h'}{k} \rho+\frac{2\pi i \alpha a}{M}\right).$$
Hence,
\begin{align*}
	&\Psi_{a,M}(q^*)\\
	&=-\sum_{\substack{1\le \rho\le k\\ \rho\equiv -ha\bmod{M^*}}}\sum_{r\ge 0}\log\Bigg(1-\exp\bigg({-\frac{2\pi}{K z}}(rk+\rho)+\frac{2\pi i \lambda}{K}\bigg)\Bigg)\\
	&=\sum_{\substack{1\le \rho\le k\\ \rho\equiv -ha\bmod{M^*}}}\sum_{1\le\nu\le K}\sum_{r,t\ge 0}\frac{1}{tK+\nu}\exp\Bigg((tK+\nu)\bigg({-\frac{2\pi}{K z}}(rk+\rho)+\frac{2\pi i \lambda}{K}\bigg)\Bigg)\\
	&=\sum_{\substack{1\le \rho\le k\\ \rho\equiv -ha\bmod{M^*}}}\sum_{1\le \nu\le K}e^{\frac{2\pi i \nu \lambda}{K}}\sum_{r,t\ge 0}\frac{1}{tK+\nu}e^{-(rk+\rho)(tK+\nu)\frac{2\pi}{Kz}}.
\end{align*}

If we substitute $\rho$ back to $\lambda$ by \eqref{eq:lambda-K} and apply the inverse Mellin transform and the functional equation of Hurwitz zeta function to $\Psi_{a,M}(q^*)$, then
\begin{align}\label{eq:Psi(q)-new-1}
	&\Psi_{a,M}(q^*)\notag\\
	&=\frac{1}{4\pi i k K} \sum_{\substack{1\le \lambda\le K\\ \lambda\equiv  a\bmod{M}}}\sum_{\substack{1\le\mu\le k\\1\le\nu\le K}} \cos\frac{2\pi \mu\rho}{k}\cos\frac{2\pi \nu \lambda}{K}\int_{(\tfrac{3}{2})}\frac{\zrie{1-s,\frac{\mu}{k}}\zrie{1+s,\frac{\nu}{K}}}{z^{-s} \cos\frac{\pi s}{2}}ds\nonumber\\
	&\quad+\frac{1}{4\pi i k K} \sum_{\substack{1\le \lambda\le K\\ \lambda\equiv  a\bmod{M}}}\sum_{\substack{1\le\mu\le k\\1\le\nu\le K}} \sin\frac{2\pi \mu\rho}{k}\cos\frac{2\pi \nu \lambda}{K}\int_{(\tfrac{3}{2})}\frac{\zrie{1-s,\frac{\mu}{k}}\zrie{1+s,\frac{\nu}{K}}}{z^{-s} \sin\frac{\pi s}{2}}ds\nonumber\\
	&\quad+\frac{1}{4\pi k K} \sum_{\substack{1\le \lambda\le K\\ \lambda\equiv  a\bmod{M}}}\sum_{\substack{1\le\mu\le k\\1\le\nu\le K}} \sin\frac{2\pi \mu\rho}{k}\sin\frac{2\pi \nu \lambda}{K}\int_{(\tfrac{3}{2})}\frac{\zrie{1-s,\frac{\mu}{k}}\zrie{1+s,\frac{\nu}{K}}}{z^{-s} \sin\frac{\pi s}{2}}ds\nonumber\\
	&\quad+\frac{1}{4\pi k K} \sum_{\substack{1\le \lambda\le K\\ \lambda\equiv  a\bmod{M}}}\sum_{\substack{1\le\mu\le k\\1\le\nu\le K}} \cos\frac{2\pi \mu\rho}{k}\sin\frac{2\pi \nu \lambda}{K}\int_{(\tfrac{3}{2})}\frac{\zrie{1-s,\frac{\mu}{k}}\zrie{1+s,\frac{\nu}{K}}}{z^{-s} \cos\frac{\pi s}{2}}ds\nonumber\\
	&=: J_1+J_2+J_3+J_4.
\end{align}
Notice that
\begin{align*}
	\Upsilon_1 = J_1\qquad\text{and}\qquad\Upsilon_3 = J_3.
\end{align*}
Further,
\begin{equation}
	2(J_1+J_3)=\Psi_{a,M}(q^*)+\Psi_{M-a,M}(q^*).
\end{equation}

\subsection{Estimations concerning Hurwitz zeta functions}

Recall that for $\Re(s)>1$ and $0<\alpha\le 1$,
$$\zrie{1-s,\alpha}=\frac{2\Gamma(s)}{(2\pi)^{s}}\sum_{n=1}^{%
	\infty}\frac{1}{n^{s}}\cos\left(\frac{\pi s}{2}-2n\pi \alpha\right);$$
this can be found in, for instance, \cite[p.~608, eq.~(25.11.9)]{Apo2010}. Thus, for $0<\alpha\le 1$, we have a uniform bound
\begin{equation}
	|\zrie{-0.5+it,\alpha}|\le \frac{2\Gamma(\tfrac{3}{2})\zeta(\tfrac{3}{2})\cosh(\tfrac{\pi|t|}{2})}{(2\pi)^{\frac{3}{2}}}.
\end{equation}
It also follows from \cite[Theorem 12.23]{Apo1976} with some simple calculations that, uniformly for $|t|\ge 3$ and $0<\alpha\le 1$, 
\begin{equation}
	|\zrie{-0.5+it,\alpha}|\le 11 |t|^{\frac{3}{2}}.
\end{equation}
Finally, we have, for $0<\alpha\le 1$,
\begin{equation}
	|\zrie{2.5+it,\alpha}|\le \alpha^{-\frac{5}{2}}+\zeta(\tfrac{5}{2}).
\end{equation}

\begin{lemma}\label{le:I+I-}
	Let $z$ be a complex number with $\Re(z)>0$ and let $0<\alpha,\beta\le 1$. Define integrals
	\begin{align}
		\mathcal{I}_+(z):=\int_{(\tfrac{3}{2})}z^s\zrie{1+s,\alpha}\zrie{1-s,\beta}\Bigg(\frac{1}{\cos\frac{\pi s}{2}}+\frac{1}{i \sin\frac{\pi s}{2}}\Bigg)ds
	\end{align}
	and
	\begin{align}
		\mathcal{I}_-(z):=\int_{(\tfrac{3}{2})}z^s\zrie{1+s,\alpha}\zrie{1-s,\beta}\Bigg(\frac{1}{\cos\frac{\pi s}{2}}-\frac{1}{i \sin\frac{\pi s}{2}}\Bigg)ds.
	\end{align}
	Then if $\Im(z)\le 0$, we have
	\begin{equation}
		|\mathcal{I}_+(z)|\le 7.23 |z|^{\frac{3}{2}}\big(\alpha^{-\frac{5}{2}}+\zeta(\tfrac{5}{2})\big),
	\end{equation}
	while if $\Im(z)\ge 0$, we have
	\begin{equation}
		|\mathcal{I}_-(z)|\le 7.23 |z|^{\frac{3}{2}}\big(\alpha^{-\frac{5}{2}}+\zeta(\tfrac{5}{2})\big).
	\end{equation}
\end{lemma}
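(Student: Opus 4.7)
The plan is to convert the trigonometric combinations into a single exponential via the elementary identities
\begin{align*}
\frac{1}{\cos\frac{\pi s}{2}}+\frac{1}{i\sin\frac{\pi s}{2}}&=-\frac{2i\,e^{i\pi s/2}}{\sin(\pi s)},\\
\frac{1}{\cos\frac{\pi s}{2}}-\frac{1}{i\sin\frac{\pi s}{2}}&=\frac{2i\,e^{-i\pi s/2}}{\sin(\pi s)},
\end{align*}
so that the task reduces to bounding
$$\int_{(3/2)}\big|z^{s}e^{\pm i\pi s/2}\big|\,\big|\zrie{1+s,\alpha}\big|\,\big|\zrie{1-s,\beta}\big|\,\frac{|ds|}{|\sin(\pi s)|}.$$
On the line $s=3/2+it$ one has $|\sin(\pi s)|=\cosh(\pi t)$ and $|z^{s}e^{\pm i\pi s/2}|=|z|^{3/2}e^{-t(\arg z\pm\pi/2)}$.

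I would treat $\mathcal{I}_{+}(z)$ first. The hypothesis $\Im(z)\le 0$ together with $\Re(z)>0$ places $\arg z$ in $[-\pi/2,0]$, so $\arg z+\pi/2\in[0,\pi/2]$; consequently $e^{-t(\arg z+\pi/2)}\le 1$ for $t\ge 0$ and $e^{-t(\arg z+\pi/2)}\le e^{\pi|t|/2}$ for $t\le 0$. Pairing with $1/\cosh(\pi t)$ yields an integrand whose absolute value decays at least like $e^{-\pi|t|/2}/\cosh(\pi t)$, hence is integrable. The first Hurwitz factor $\zrie{1+s,\alpha}$ is uniformly bounded by $\alpha^{-5/2}+\zeta(5/2)$ on $\Re(s)=3/2$ and may be pulled out. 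It remains to estimate $|\zrie{-1/2-it,\beta}|$ against the exponentially decaying kernel, which I would do by splitting at $|t|=3$: on the compact piece $|t|\le 3$ use the $\cosh(\pi|t|/2)$ bound recalled above, and on the tails $|t|\ge 3$ use the polynomial bound $11|t|^{3/2}$.

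This reduces the problem to evaluating an absolutely convergent one-dimensional integral in $t$, and the numerical constant $7.23$ is produced by explicit computation. The bound for $\mathcal{I}_{-}(z)$ under $\Im(z)\ge 0$ is perfectly symmetric: either repeat the argument with $e^{-i\pi s/2}$ in place of $e^{i\pi s/2}$ (now $\arg z-\pi/2\in[-\pi/2,0]$ under the new hypothesis so the same decay is obtained, with the worst direction swapped from $t\le 0$ to $t\ge 0$), or invoke complex conjugation using that $\alpha$ and $\beta$ are real.

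The main obstacle is producing the explicit constant $7.23$, as opposed to a qualitative $\ll|z|^{3/2}(\alpha^{-5/2}+\zeta(5/2))$. This requires optimizing the exponent $e^{-t(\arg z+\pi/2)}$ uniformly over $\arg z\in[-\pi/2,0]$, where the extremal case is essentially $\arg z=0$ on the $t\le 0$ side, and then tightly bounding the tail integrals of the form $\int_{3}^{\infty} t^{3/2}\,e^{-\pi t/2}/\cosh(\pi t)\,dt$ rather than dispatching them with a crude $\ll$-type estimate.
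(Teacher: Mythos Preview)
Your approach is essentially identical to the paper's: both reduce to the modulus identity
$\left|\dfrac{1}{\cos\frac{\pi s}{2}}+\dfrac{1}{i\sin\frac{\pi s}{2}}\right|=\dfrac{2e^{-\pi t/2}}{|\sin(\pi s)|}$
on $s=3/2+it$, pull out $|\zeta(5/2+it,\alpha)|\le\alpha^{-5/2}+\zeta(5/2)$, and integrate $|\zeta(-1/2-it,\beta)|$ against the remaining kernel; the paper simply uses the symmetric majorant $e^{-t(\arg z+\pi/2)}\le e^{\pi|t|/2}$ for all $t$ rather than your case split, and then reports the numerical value of the resulting one-dimensional integral without writing out the $|t|\lessgtr 3$ decomposition you sketch.
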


\begin{proof}
	Writing $s=\frac{3}{2}+it$ since the path of integration is the vertical line $\Re(s)=\frac{3}{2}$, we have
	$$|z^s|=|z|^{\frac{3}{2}}e^{-\Arg(z)t}.$$
	Also,
	$$\left|\frac{1}{\cos\frac{\pi s}{2}}+\frac{1}{i \sin\frac{\pi s}{2}}\right|=\frac{2e^{-\frac{\pi}{2}t}}{|\sin(\pi s)|}.$$
	Hence, for $z$ with $\Im(z)\le 0$ (recall that $\Re(z)>0$ so that $-\frac{\pi}{2}<\Arg(z)\le 0$), we have
	$$|z^s|\left|\frac{1}{\cos\frac{\pi s}{2}}+\frac{1}{i \sin\frac{\pi s}{2}}\right|\le 2|z|^{\frac{3}{2}}\frac{e^{\frac{\pi}{2}|t|}}{|\sin(\pi s)|}.$$
	It follows that
	\begin{align*}
		|\mathcal{I}_+(z)|&\le 2|z|^{\frac{3}{2}}\big((\alpha^{-\frac{5}{2}}+\zeta(\tfrac{5}{2})\big)\int_{-\infty}^\infty \left|\zrie{-0.5-it,\beta}\right|\frac{e^{\frac{\pi}{2}|t|}}{|\sin(\pi (1.5+it))|}dt\\
		&\le 7.23 |z|^{\frac{3}{2}}\big(\alpha^{-\frac{5}{2}}+\zeta(\tfrac{5}{2})\big).
	\end{align*}
	Similar arguments also apply to $\mathcal{I}_-(z)$ if $\Im(z)\ge 0$.
\end{proof}

\subsection{Bounding the integrals}

Recall that
$$z=\frac{\tau k}{2\pi}.$$
For $\Upsilon_2$ and $\Upsilon_4$, we define, with $j=2$ or $4$,
\begin{equation}
	\Upsilon_j\pm J_j:=\begin{cases}
		\Upsilon_j+ J_j & \text{if $\Im(z)\ge 0$},\\
		\Upsilon_j- J_j & \text{if $\Im(z)<0$}.
	\end{cases}
\end{equation}
It follows from Lemma \ref{le:I+I-} that
\begin{align}
	|\Upsilon_j\pm J_j|&\le \frac{1}{4\pi k K} \frac{k K}{M} \sum_{1\le\nu\le K} 7.23\; |z|^{\frac{3}{2}}\Bigg(\left(\frac{K}{\nu}\right)^{\frac{5}{2}}+\zeta(\tfrac{5}{2})\Bigg)\notag\\
	&\le \frac{1}{4\pi M}\cdot 7.23\; |z|^{\frac{3}{2}} \cdot 2\zeta(\tfrac{5}{2})K^{\frac{5}{2}}\notag\\
	&\le \frac{7.23\;\zeta(\tfrac{5}{2})}{2\pi M}\left|\frac{\tau k}{2\pi}\right|^{\frac{3}{2}}\left(k\frac{M}{(k,M)}\right)^{\frac{5}{2}}\notag\\
	\text{\tiny (by \eqref{tau-upper-bound})}&\le \frac{7.23\;\zeta(\tfrac{5}{2})}{2\pi M}\left(\frac{\sqrt{2}}{N}\right)^{\frac{3}{2}} \left(\frac{M}{(k,M)}\right)^{\frac{5}{2}} N^{\frac{5}{2}}\notag\\
	&\le \frac{7.23\;\zeta(\tfrac{5}{2})\;2^{\frac{3}{4}}}{2\pi M}\left(\frac{M}{(k,M)}\right)^{\frac{5}{2}}\sqrt{2\pi X}\notag\\
	&\le 6.51\frac{M^{\frac{3}{2}}}{(k,M)^{\frac{5}{2}}} X^{\frac{1}{2}}\notag\\
	&\ll X^{\frac{1}{2}}.
\end{align}

On the other hand, we recall from \eqref{eq:q^*} that
$$q^*=\exp\left(\frac{2\pi i\beta h'}{k}-\frac{2\pi}{K z}\right).$$
Hence,
\begin{align*}
	|q^*|=\exp\left(\Re\left(-\frac{2\pi}{Kz}\right)\right)=\exp\left(-4\pi^2\frac{(k,M)}{M}\Re\left(\frac{1}{k^2\tau}\right)\right).
\end{align*}
It follows from \eqref{eq:1/k tau-sq} that
\begin{equation}\label{eq:q^*-bound}
	|q^*|\le \exp\left(-4\pi^2\frac{(k,M)}{M}\cdot 0.07\right)\ll 1.
\end{equation}
We further deduce from some simple partition-theoretic arguments that, for any $a$ among $1,2,\ldots,M$,
\begin{align*}
	e^{|\Re(\Psi_{a,M}(q^*))|}&\le \prod_{\substack{m\ge 1\\m\equiv -ha\bmod{M^*}}}\frac{1}{1-|q^*|^{m}}\le \frac{1}{(|q^*|;|q^*|)_\infty}\\
	&=\exp\left(-\sum_{\ell\ge 1}\log(1-|q^*|^{\ell})\right)=\exp\left(\sum_{\ell\ge 1}\sum_{m\ge 1}\frac{|q^*|^{\ell m}}{m}\right)\\
	&\le \exp\left(\sum_{n\ge 1} n|q^*|^n\right)=\exp\left(\frac{|q^*|}{(1-|q^*|)^2}\right).
\end{align*}
In consequence,
\begin{align*}
	|\Re(\Psi_{a,M}(q^*))|\le \frac{e^{-0.28 \pi^2\frac{(k,M)}{M}}}{\left(1-e^{-0.28 \pi^2\frac{(k,M)}{M}}\right)^2}\ll 1.
\end{align*}

Finally, we bound
\begin{align*}
	&|\Re(\Upsilon_1+\Upsilon_2+\Upsilon_3+\Upsilon_4)|\\
	&\qquad\le |\Re(\Upsilon_1+\Upsilon_3)|+|\Re(\Upsilon_2+\Upsilon_4)|\\
	&\qquad\le |\Re(J_1+J_3)|+|\Re(J_2+J_4)|+|\Upsilon_2\pm J_2|+|\Upsilon_4\pm J_4|\\
	&\qquad\le |\Re(\Psi_{a,M}(q^*))|+2|\Re(J_1+J_3)|+|\Upsilon_2\pm J_2|+|\Upsilon_4\pm J_4|\\
	&\qquad\le 2|\Re(\Psi_{a,M}(q^*))|+|\Re(\Psi_{M-a,M}(q^*))|+|\Upsilon_2\pm J_2|+|\Upsilon_4\pm J_4|.
\end{align*}
Here for the marginal case where $a=M$, we simply notice the fact that $\Psi_{0,M}(q^*)=\Psi_{M,M}(q^*)$. It turns out that
\begin{align*}
	|\Re(\Upsilon_1+\Upsilon_2+\Upsilon_3+\Upsilon_4)|\le 3\cdot \frac{e^{-0.28 \pi^2\frac{(k,M)}{M}}}{\left(1-e^{-0.28 \pi^2\frac{(k,M)}{M}}\right)^2}+2\cdot 6.51\frac{M^{\frac{3}{2}}}{(k,M)^{\frac{5}{2}}} X^{\frac{1}{2}}.
\end{align*}
The above evaluation is summarized as follows.

\begin{theorem}
	We have
	\begin{align}\label{eq:bound-integral}
		|\Re(\Upsilon_1+\Upsilon_2+\Upsilon_3+\Upsilon_4)|\le \frac{3e^{-0.28 \pi^2\frac{(k,M)}{M}}}{\left(1-e^{-0.28 \pi^2\frac{(k,M)}{M}}\right)^2}+13.02\frac{M^{\frac{3}{2}}}{(k,M)^{\frac{5}{2}}} X^{\frac{1}{2}}
		\ll X^{\frac{1}{2}}.
	\end{align}
\end{theorem}

\section{Residues}\label{sec:res}

\subsection{Lemmas}\label{sec:res-lemmas}

We first require some finite summation formulas of Hurwitz zeta functions, which follow from the first two aligned formulas on page 587 of \cite{Bla2015}.

\begin{lemma}
	For any $\theta=1,2,\ldots,k$,
	\begin{align}\label{eq:sum-cos-zeta-0}
		\sum_{1\le\alpha\le k}\cos\frac{2\pi \alpha\theta}{k}\zrie{0,\frac{\alpha}{k}}=-\frac{1}{2}
	\end{align}
	and
	\begin{align}\label{eq:sum-cos-zeta-2}
		\sum_{1\le\alpha\le k}\cos\frac{2\pi \alpha\theta}{k}\zrie{2,\frac{\alpha}{k}}=\frac{\pi^2}{6}(6\theta^2-6k\theta+k^2).
	\end{align}
	For any $\theta=1,2,\ldots,k-1$,
	\begin{align}\label{eq:sum-sin-zeta-0}
		\sum_{1\le\alpha\le k}\sin\frac{2\pi \alpha\theta}{k}\zrie{0,\frac{\alpha}{k}}=\frac{1}{2\pi}\Bigg(\dg{1-\frac{\theta}{k}}-\dg{\frac{\theta}{k}}\Bigg)=\frac{1}{2}\cot\frac{\pi \theta}{k}
	\end{align}
	and
	\begin{align}\label{eq:sum-sin-zeta-2}
		\sum_{1\le\alpha\le k}\sin\frac{2\pi \alpha\theta}{k}\zrie{2,\frac{\alpha}{k}}&=2\pi k^2\Bigg(\zeta'\left(-1,\frac{\theta}{k}\right)-\zeta'\left(-1,1-\frac{\theta}{k}\right)\Bigg).
	\end{align}
\end{lemma}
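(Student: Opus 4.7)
The plan is to derive all four identities from Hurwitz's functional equation for $\zrie{s,\theta/k}$ (already displayed in the excerpt), specialised or differentiated at $s=-1$, together with the elementary closed forms $\zrie{0,a}=\tfrac{1}{2}-a$ and $\zrie{-1,a}=-\tfrac{1}{2}\bigl(a^2-a+\tfrac{1}{6}\bigr)$. The two $\zrie{0,\cdot}$ sums can also be handled directly by substitution, which turns out to be cleaner than wrestling with the Laurent expansion of $\zrie{1-s,\cdot}$ near $s=0$.

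For the two $\zrie{0,\cdot}$ identities I would substitute $\zrie{0,\alpha/k}=\tfrac{1}{2}-\alpha/k$, so both sums reduce to evaluating $T(\theta):=\sum_{\alpha=1}^{k}\alpha\,\omega^{\alpha}$ with $\omega=e^{2\pi i\theta/k}$. Using $\omega^k=1$ and differentiating the geometric series, one gets $T(\theta)=\tfrac{k\omega}{\omega-1}=\tfrac{k}{2}-\tfrac{ik}{2}\cot(\pi\theta/k)$ for $\theta\not\equiv 0\pmod k$. The real part then yields the cosine–$\zrie{0}$ identity (the case $\theta=k$ being a trivial separate check) and the imaginary part yields the sine–$\zrie{0}$ identity, the first equality in the latter coming from the digamma reflection $\dg{1-x}-\dg{x}=\pi\cot(\pi x)$.

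For the cosine–$\zrie{2}$ identity I would specialise Hurwitz's functional equation at $s=-1$. Because $\cos(-\pi/2)=0$ and $\sin(-\pi/2)=-1$, only the cosine-sum contributes, and solving gives
$$\sum_{\alpha}\cos\tfrac{2\pi\alpha\theta}{k}\zrie{2,\alpha/k}=-2\pi^{2}k^{2}\zrie{-1,\theta/k}=\tfrac{\pi^{2}}{6}\bigl(6\theta^{2}-6k\theta+k^{2}\bigr).$$

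The main obstacle is the fourth identity, involving $\zeta'(-1,\cdot)$: at $s=-1$ the sine-sum is wiped out by the factor $\cos(\pi s/2)$, so plain evaluation fails. My workaround is to subtract Hurwitz's equation for $\zrie{s,1-\theta/k}$ from that for $\zrie{s,\theta/k}$. Since $\cos(2\pi(k-\theta)\alpha/k)=\cos(2\pi\theta\alpha/k)$ and $\sin(2\pi(k-\theta)\alpha/k)=-\sin(2\pi\theta\alpha/k)$, the cosine-sums cancel, leaving
$$\zrie{s,\theta/k}-\zrie{s,1-\theta/k}=4\Gamma(1-s)(2\pi k)^{s-1}\cos\tfrac{\pi s}{2}\sum_{\alpha}\sin\tfrac{2\pi\theta\alpha}{k}\zrie{1-s,\alpha/k}.$$
Both sides vanish at $s=-1$ (the left because $B_{2}(x)=B_{2}(1-x)$), so I would differentiate once in $s$ and evaluate at $s=-1$. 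On the right, every contribution in which $\cos(\pi s/2)$ is \emph{not} differentiated retains the factor $\cos(\pi s/2)|_{s=-1}=0$, hence only the term where $\cos(\pi s/2)$ itself is differentiated survives; its derivative equals $\pi/2$ at $s=-1$. After rearrangement this delivers $\sum_{\alpha}\sin\tfrac{2\pi\theta\alpha}{k}\zrie{2,\alpha/k}=2\pi k^{2}\bigl(\zeta'(-1,\theta/k)-\zeta'(-1,1-\theta/k)\bigr)$, as claimed.
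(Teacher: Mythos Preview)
Your argument is correct. The direct evaluation of the $\zrie{0,\cdot}$ sums via $\zrie{0,a}=\tfrac12-a$ and the closed form $T(\theta)=k\omega/(\omega-1)$ is clean, and your treatment of the two $\zrie{2,\cdot}$ sums by specialising (respectively, subtracting and then differentiating) Hurwitz's functional equation at $s=-1$ is sound; in particular your observation that only the term in which $\cos\frac{\pi s}{2}$ is differentiated survives at $s=-1$ is exactly right, and the resulting constant $\frac{1}{2\pi k^{2}}$ matches the claimed identity.

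The paper itself does not supply a proof: it simply states that the four formulas ``follow from the first two aligned formulas on page~587 of \cite{Bla2015}.'' Your derivation is therefore genuinely different in that it is self-contained and uses only ingredients already present in the paper (the Hurwitz functional equation quoted in Section~2, the Bernoulli-polynomial values of $\zrie{0,\cdot}$ and $\zrie{-1,\cdot}$, and the reflection formula for the digamma). This has the advantage of removing the external dependence on Blagouchine's paper; the citation route has the advantage of brevity.
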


We also need three finite summation formulas of the digamma functions due to Gau\ss{} (cf.~\cite[p.~19, eq.~(49)]{SC2001} or \cite[eqs.~(77) and (B.6)]{Bla2015}).

\begin{lemma}
	For any $\theta=1,2,\ldots,k-1$,
	\begin{align}\label{eq:sum-cos-gamma}
		\sum_{1\le\alpha\le k}\cos\frac{2\pi \alpha\theta}{k}\dg{\frac{\alpha}{k}}=k \log\left(2\sin\frac{\pi \theta}{k}\right)
	\end{align}
	and
	\begin{align}\label{eq:sum-sin-gamma}
		\sum_{1\le\alpha\le k}\sin\frac{2\pi \alpha\theta}{k}\dg{\frac{\alpha}{k}}=\frac{\pi}{2}(2\theta-k).
	\end{align}
	Further,
	\begin{align}\label{eq:sum-gamma}
		\sum_{1\le\alpha\le k}\dg{\frac{\alpha}{k}}=-k(\gamma+\log k).
	\end{align}
\end{lemma}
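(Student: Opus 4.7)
My plan is to view each of the three sums as a discrete Fourier coefficient, on $\mathbb{Z}/k\mathbb{Z}$, of the sequence $\alpha\mapsto\psi(\alpha/k)$ at frequency $\theta$, and to read the identities off directly from \emph{Gauss's digamma theorem}: for $1\le\alpha\le k-1$,
\[
\psi\!\left(\tfrac{\alpha}{k}\right)=-\gamma-\log k-\tfrac{\pi}{2}\cot\tfrac{\pi\alpha}{k}+\sum_{n=1}^{k-1}\cos\tfrac{2\pi n\alpha}{k}\log\!\left(2\sin\tfrac{\pi n}{k}\right),
\]
together with $\psi(1)=-\gamma$ at $\alpha=k$. (This is equivalent to the usual statement of Gauss's theorem, with its sum up to $\lfloor (k-1)/2\rfloor$ and its constant $\log(2k)$, after pairing indices $n\leftrightarrow k-n$ and absorbing the leftover $-\log 2$ using $\sum_{n=1}^{k-1}\cos(2\pi n\alpha/k)=-1$.)

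The preliminary step is to rewrite this as a pure finite Fourier expansion in $\alpha$ on $\mathbb{Z}/k\mathbb{Z}$. Via the classical sawtooth-type identity
\[
-\tfrac{\pi}{2}\cot\tfrac{\pi\alpha}{k}=\sum_{n=1}^{k-1}\tfrac{\pi(2n-k)}{2k}\sin\tfrac{2\pi n\alpha}{k}\qquad(1\le\alpha\le k-1),
\]
Gauss's formula becomes the decomposition with constant term $-\gamma-\log k$, cosine coefficient at frequency $n$ equal to $\log(2\sin(\pi n/k))$, and sine coefficient at frequency $n$ equal to $\pi(2n-k)/(2k)$. This formula extends automatically to $\alpha=k$: the sine part vanishes, and by the product identity $\prod_{n=1}^{k-1}2\sin(\pi n/k)=k$ the cosine part collapses to $\log k$, leaving $-\gamma=\psi(1)$.

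Armed with this representation, the first two identities drop out by the orthogonality relations on $\{1,\ldots,k\}$,
\[
\sum_{\alpha=1}^{k}\cos\tfrac{2\pi\alpha\theta}{k}\cos\tfrac{2\pi n\alpha}{k}=\tfrac{k}{2}(\delta_{n,\theta}+\delta_{n,k-\theta}),\qquad \sum_{\alpha=1}^{k}\sin\tfrac{2\pi\alpha\theta}{k}\sin\tfrac{2\pi n\alpha}{k}=\tfrac{k}{2}(\delta_{n,\theta}-\delta_{n,k-\theta}),
\]
valid for $1\le\theta,n\le k-1$. Under the involution $n\mapsto k-n$ the cosine coefficients are invariant, so the $\theta$- and $(k-\theta)$-contributions reinforce to give $k\log(2\sin(\pi\theta/k))$; the sine coefficients are antisymmetric, so in the sine sum they combine with the opposite sign to produce $(\pi/2)(2\theta-k)$. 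The third identity is the $\theta=0$ instance, but it is most cleanly obtained from the Gauss multiplication formula $\sum_{j=0}^{k-1}\psi(z+j/k)=k\psi(kz)-k\log k$ at $z=1/k$, which immediately yields $\sum_{\alpha=1}^{k}\psi(\alpha/k)=k\psi(1)-k\log k=-k(\gamma+\log k)$.

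The only delicate ingredient I expect is the sine-Fourier representation of $-\tfrac{\pi}{2}\cot(\pi\alpha/k)$ and its clean extension through $\alpha=k$, both of which are standard but must be written out carefully because of the removable pole of the cotangent at integer arguments; everything else is pure orthogonality bookkeeping.
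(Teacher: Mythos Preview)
Your argument is correct. The Fourier decomposition of $\psi(\alpha/k)$ that you derive from Gauss's digamma theorem, together with the finite orthogonality relations, yields all three identities cleanly; the verification that the representation extends to $\alpha=k$ via $\prod_{n=1}^{k-1}2\sin(\pi n/k)=k$ and the use of the multiplication formula at $z=1/k$ for the third sum are both valid.

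For comparison: the paper does not actually prove this lemma. It simply records the three identities as classical results of Gau\ss{} and cites Srivastava--Choi \cite{SC2001}. So you have supplied a self-contained proof where the paper offers only a reference. Your route---inverting the Gauss formula by discrete Fourier analysis---is the natural one and is essentially how these identities are established in the literature; the only point worth flagging is that the equivalence you assert between your ``full-sum'' form of Gauss's theorem (summing $n$ from $1$ to $k-1$ with coefficients $\log(2\sin(\pi n/k))$ and constant $-\gamma-\log k$) and the textbook ``half-sum'' form (summing to $\lfloor(k-1)/2\rfloor$ with constant $-\gamma-\log(2k)$) is genuine but deserves the one-line justification you sketch, since readers may only recognize the latter.
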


Let $\gamma_n(\alpha)$ be the $n$-th generalized Stieltjes constant defined by
\begin{align*}
	\zeta(s,\alpha)=-\frac{1}{1-s}+\sum_{n\ge 0}\frac{\gamma_n(\alpha)}{n!}(1-s)^n.
\end{align*}
The following evaluation in \cite[p.~562, Theorem 2]{Bla2015} is necessary.

\begin{lemma}
	For any $\theta=1,2,\ldots,k-1$,
	\begin{align}\label{eq:sum-Sti}
		\sum_{1\le \alpha\le k}\sin\frac{2\pi \alpha\theta}{k}\gamma_1\!\left(\frac{\alpha}{k}\right)&=\frac{\pi}{2}\big(\gamma+\log(2\pi k)\big)(2\theta-k)-\frac{\pi \log(2\pi)}{2}k\notag\\
		&\quad+\frac{\pi k}{2}\log\left(2\sin\frac{\pi \theta}{k}\right)+\pi k \log \Gamma\left(\frac{\theta}{k}\right).
	\end{align}
\end{lemma}

It is notable that in $R_j$ with $1\le j\le 4$, the residues are essentially from
$$\frac{\zrie{1-s,\frac{\mu}{k}}\zrie{1+s,\frac{\nu}{K}}}{z^{-s} \operatorname{trig}\frac{\pi s}{2}},$$
where the $\operatorname{trig}$ function is $\cos$ or $\sin$. Now we compute that
\begin{align*}
	\cR_c:\!&=\sum_{|\Re(s)|< \frac{3}{2}}\Res_s\frac{\zrie{1-s,\frac{\mu}{k}}\zrie{1+s,\frac{\nu}{K}}}{z^{-s} \cos\frac{\pi s}{2}}= \underset{s=0}{\Res}\,(*)+\underset{s=-1}{\Res}\,(*)+\underset{s=1}{\Res}\,(*)\\
	&=-\log z-\dg{\frac{\mu}{k}}+\dg{\frac{\nu}{K}}+\frac{2\zrie{2,\frac{\mu}{k}}\zrie{0,\frac{\nu}{K}}}{\pi z}-\frac{2z\zrie{0,\frac{\mu}{k}}\zrie{2,\frac{\nu}{K}}}{\pi},
\end{align*}
and that
\begin{align*}
	\cR_s:\!&=\sum_{|\Re(s)|< \frac{3}{2}}\Res_s\frac{\zrie{1-s,\frac{\mu}{k}}\zrie{1+s,\frac{\nu}{K}}}{z^{-s} \sin\frac{\pi s}{2}}=\underset{s=0}{\Res}\,(*)\\
	&=-\frac{\pi}{12}-\frac{(\log z)^2}{\pi}-\frac{2\log z}{\pi}\dg{\frac{\mu}{k}}+\frac{2\log z}{\pi}\dg{\frac{\nu}{K}}\\[6pt]
	&\quad+\frac{2}{\pi}\dg{\frac{\mu}{k}}\dg{\frac{\nu}{K}}+\frac{2}{\pi}\gamma_1\!\left(\frac{\mu}{k}\right)+\frac{2}{\pi}\gamma_1\!\left(\frac{\nu}{K}\right).
\end{align*}

Finally, let us recall the three cases distinguished in Remark \ref{rmk:3-cases}:
\begin{itemize}[leftmargin=*,align=left]
	\renewcommand{\labelitemi}{\scriptsize$\blacktriangleright$}
	
	\item
	\textit{Case 1.}~$a\not\equiv 0 \pmod{M}$ and $b\not\equiv 0 \pmod{M^*}$: In this case, $\lambda$ cannot take the value $K$ and $\rho$ cannot take the value $k$.
	
	\item
	\textit{Case 2.}~$a\not\equiv 0 \pmod{M}$ and $b\equiv 0 \pmod{M^*}$: In this case, $\lambda$ cannot take the value $K$ but $\rho$ may take the value $k$. In particular, the corresponding $\lambda$ of $\rho=k$, denoted by $\lambda_k$, is a multiple of $k$. Thus, recalling that $K=k\frac{M}{M^*}$, we have $k\le \lambda_k\le k(\frac{M}{M^*}-1)$, that is,
	\begin{align}
		\frac{M^*}{M}\le \frac{\lambda_k}{K}\le 1-\frac{M^*}{M}.
	\end{align}
	
	\item
	\textit{Case 3.}~$a\equiv 0 \pmod{M}$ and $b\equiv 0 \pmod{M^*}$: In this case, $\lambda$ may take the value $K$ and $\rho$ may take the value $k$. In particular, the corresponding $\lambda$ of $\rho=k$ is $K$ and vice versa.
\end{itemize}
The latter two cases may lead to some marginal contributions to $R_1$ and $R_2$ when processing the evaluations of
\begin{align*}
	\sum_{1\le\mu\le k}\cos\frac{2\pi \mu\rho}{k}\quad\text{and}\quad\sum_{1\le\nu\le K}\cos\frac{2\pi \nu \lambda}{K}.
\end{align*}

\subsection{Evaluation of $R_1$}

We have
\begin{align*}
	R_1=\frac{1}{4\pi i k K} \sum_{\substack{1\le \lambda\le K\\ \lambda\equiv  a\bmod{M}}}\sum_{\substack{1\le\mu\le k\\1\le\nu\le K}} \cos\frac{2\pi \mu\rho}{k}\cos\frac{2\pi \nu \lambda}{K}\cdot \cR_c.
\end{align*}
Now write
$$R_1=R_{11}+R_{12}+R_{13}+R_{14},$$
where
\begin{align*}
	R_{11}&:=\frac{1}{4\pi i k K} \sum_{\substack{1\le \lambda\le K\\ \lambda\equiv  a\bmod{M}}}\sum_{\substack{1\le\mu\le k\\1\le\nu\le K}} \cos\frac{2\pi \mu\rho}{k}\cos\frac{2\pi \nu \lambda}{K}\cdot\frac{2\zrie{2,\frac{\mu}{k}}\zrie{0,\frac{\nu}{K}}}{\pi z},\\
	R_{12}&:=\frac{1}{4\pi i k K} \sum_{\substack{1\le \lambda\le K\\ \lambda\equiv  a\bmod{M}}}\sum_{\substack{1\le\mu\le k\\1\le\nu\le K}} \cos\frac{2\pi \mu\rho}{k}\cos\frac{2\pi \nu \lambda}{K}\cdot\left(-\frac{2z\zrie{0,\frac{\mu}{k}}\zrie{2,\frac{\nu}{K}}}{\pi}\right),\\
	R_{13}&:=\frac{1}{4\pi i k K} \sum_{\substack{1\le \lambda\le K\\ \lambda\equiv  a\bmod{M}}}\sum_{\substack{1\le\mu\le k\\1\le\nu\le K}} \cos\frac{2\pi \mu\rho}{k}\cos\frac{2\pi \nu \lambda}{K}\cdot(-\log z),\\
	R_{14}&:=\frac{1}{4\pi i k K} \sum_{\substack{1\le \lambda\le K\\ \lambda\equiv  a\bmod{M}}}\sum_{\substack{1\le\mu\le k\\1\le\nu\le K}} \cos\frac{2\pi \mu\rho}{k}\cos\frac{2\pi \nu \lambda}{K}\cdot\left(-\dg{\frac{\mu}{k}}+\dg{\frac{\nu}{K}}\right).
\end{align*}

\begin{theorem}
	We have
	\begin{equation}\label{eq:R11}
		{-2}\pi iR_{11}=\frac{1}{\tau}\frac{\pi^2}{6 k^2 M}\big(6b^2-6b(k,M)+(k,M)^2\big).
	\end{equation}
	Also,
	\begin{align}
		|\Re({-2}\pi i R_{12})|&\le \tfrac{1}{24}MX^{-1}\ll X^{-1},\label{eq:R12}\\
		|\Re({-2}\pi i R_{13})|&\le \tfrac{1}{2}\log X+0.92\ll \log X,\label{eq:R13}\\
		|\Re({-2}\pi i R_{14})|&\le \tfrac{1}{2}\tfrac{M}{(k,M)}\ll 1.\label{eq:R14}
	\end{align}
\end{theorem}

\begin{proof}
	We begin with the evaluation of $R_{11}$ and derive that
	\begin{align}\label{eq:R11-0}
		R_{11}&=\frac{1}{z}\frac{1}{2i \pi^2 k K}\sum_{\substack{1\le \lambda\le K\\ \lambda\equiv  a\bmod{M}}}\sum_{1\le \mu\le k}\cos\frac{2\pi \mu\rho}{k}\zrie{2,\frac{\mu}{k}}\sum_{1\le \nu\le K}\cos\frac{2\pi \nu \lambda}{K}\zrie{0,\frac{\nu}{K}}\notag\\
		\text{\tiny (by $\substack{\eqref{eq:sum-cos-zeta-0}\\ \eqref{eq:sum-cos-zeta-2}}$)}&=\frac{1}{z}\frac{1}{2i \pi^2 k K}\sum_{\substack{1\le \rho\le k\\ \rho\equiv b\bmod{M^*}}}\frac{\pi^2}{6}(6\rho^2-6k\rho+k^2)\cdot \left(-\frac{1}{2}\right)\notag\\
		&=\frac{1}{z}\frac{1}{2i \pi^2 k K}\cdot \frac{\pi^2}{6}\frac{k}{M^*}\big(6b^2-6bM^*+(M^*)^2\big)\cdot \left(-\frac{1}{2}\right)\notag\\
		&=-\frac{2\pi}{\tau k} \frac{1}{24i k M}\big(6b^2-6bM^*+(M^*)^2\big),
	\end{align}
	thereby yielding \eqref{eq:R11}.
	
	Similarly, for $R_{12}$, it is computed that
	\begin{align}
		R_{12}&=-z\frac{1}{2i \pi^2 k K}\sum_{\substack{1\le \lambda\le K\\ \lambda\equiv  a\bmod{M}}}\sum_{1\le \mu\le k}\cos\frac{2\pi \mu\rho}{k}\zrie{0,\frac{\mu}{k}}\sum_{1\le \nu\le K}\cos\frac{2\pi \nu \lambda}{K}\zrie{2,\frac{\nu}{K}}\notag\\
		\text{\tiny (by $\substack{\eqref{eq:sum-cos-zeta-0}\\ \eqref{eq:sum-cos-zeta-2}}$)}&=-z\frac{1}{2i \pi^2 k K}\sum_{\substack{1\le \lambda\le K\\ \lambda\equiv  a\bmod{M}}}\left(-\frac{1}{2}\right)\cdot \frac{\pi^2}{6}(6\lambda^2-6k\lambda+k^2)\notag\\
		&=-z\frac{1}{2i \pi^2 k K}\cdot \left(-\frac{1}{2}\right)\cdot \frac{\pi^2}{6}\frac{K}{M}(6a^2-6aM+M^2)\notag\\
		&=\tau \frac{1}{48i\pi M}(6a^2-6aM+M^2).
	\end{align}
	Recalling that $a=1,2,\ldots,M$, we have
	\begin{align*}
		|\Re({-2}\pi i R_{12})|= |\Re(\tau)|\frac{|6a^2-6aM+M^2|}{24M}\le \frac{1}{X}\cdot \frac{M^2}{24M}=\frac{1}{24}MX^{-1}.
	\end{align*}
	Hence, \eqref{eq:R12} is proved.
	
	To estimate $R_{13}$, we proceed as follows. For \textit{Cases 1 \& 2} as presented in Section \ref{sec:res-lemmas}, $\lambda$ cannot take the value $K$, and thus we always have the vanishing of $\sum\limits_{1\le\nu\le K}\cos\frac{2\pi \nu \lambda}{K}$. It follows that
	\begin{align}
		R_{13}=0\qquad{\text{(\textit{Cases 1 \& 2})}}.
	\end{align}
	For \textit{Case 3}, the only contribution comes from the marginal case $(\lambda,\rho)=(K,k)$. Therefore,
	\begin{align}
		R_{13}=-\frac{\log z}{4\pi i}\qquad{\text{(\textit{Case 3})}}.
	\end{align}
	Notice that $\Re(\log z)=\log|z|=\log|\frac{\tau k}{2\pi}|$. Since $\frac{1}{X}\le |\tau|\le \frac{2\sqrt{2}\pi}{kN}$ and $1\le k\le N$ while $N=\lfloor\sqrt{2\pi X}\rfloor>\sqrt{2}$ as we have assumed that $X\ge 16$, it follows that $\frac{1}{2\pi X}\le |\frac{\tau k}{2\pi}|<1$. Thus,
	\begin{align}\label{eq:Re-log-z}
		|\Re(\log z)|\le \log(2\pi X).
	\end{align}
	Overall,
	\begin{align*}
		|\Re({-2}\pi i R_{13})|\le \frac{1}{2}\log(2\pi X),
	\end{align*}
	which gives \eqref{eq:R13}.
	
	The estimation of $R_{14}$ is more delicate. For \textit{Case 1}, we simply have
	\begin{align}
		R_{14}=0\qquad{\text{(\textit{Case 1})}}.
	\end{align}
	For \textit{Case 2}, we have a marginal contribution from $(\lambda,\rho)=(\lambda_k,k)$. Notice that there is no contribution from the term $-\dg{\frac{\mu}{k}}$ as $\lambda_k\ne K$ implies the vanishing of $\sum\limits_{1\le\nu\le K}\cos\frac{2\pi \nu \lambda_k}{K}$. Thus,
	\begin{align*}
		R_{14}&=\frac{1}{4\pi i k K}\left(\sum_{1\le\mu\le k} 1\right)\!\!\left(\sum_{1\le\nu\le K}\cos\frac{2\pi \nu \lambda_k}{K}\dg{\frac{\nu}{K}}\right)\\
		\text{\tiny (by $\eqref{eq:sum-cos-gamma}$)}&=\frac{1}{4\pi i k K}\cdot k\cdot K \log\left(2\sin\frac{\pi \lambda_k}{K}\right).
	\end{align*}
	Therefore,
	\begin{align}
		R_{14}=\frac{1}{4\pi i}\log\left(2\sin\frac{\pi \lambda_k}{K}\right)\qquad{\text{(\textit{Case 2})}}.
	\end{align}
	For \textit{Case 3}, we have a marginal contribution from $(\lambda,\rho)=(K,k)$. Then
	\begin{align*}
		R_{14}&=\frac{1}{4\pi i k K}\left[\left(\sum_{1\le\mu\le k} 1\right)\!\!\left(\sum_{1\le\nu\le K}\dg{\frac{\nu}{K}}\right)-\left(\sum_{1\le\mu\le k} \dg{\frac{\mu}{k}}\right)\!\!\left(\sum_{1\le\nu\le K}1\right)\right]\\
		\text{\tiny (by $\eqref{eq:sum-gamma}$)}&=\frac{1}{4\pi i k K}\big[{-k}\cdot K(\gamma+\log K) + k(\gamma+\log k)\cdot K\big].
	\end{align*}
	Thus,
	\begin{align}
		R_{14}=\frac{1}{4\pi i}\log\frac{k}{K}=\frac{1}{4\pi i}\log\frac{M^*}{M}\qquad{\text{(\textit{Case 3})}}.
	\end{align}
	Notice that
	\begin{align}\label{eq:log-sin-bound}
		\left|\log(2\sin x)\right|\le \begin{cases}
			\dfrac{\pi \log 2}{2x} & \text{if $0<x\le \frac{\pi}{2}$},\\[10pt]
			\dfrac{\pi \log 2}{2(\pi-x)} & \text{if $\frac{\pi}{2}\le x< \pi$}.
		\end{cases}
	\end{align}
	Since $\frac{M^*}{M}\le \frac{\lambda_k}{K}\le 1-\frac{M^*}{M}$, we have
	\begin{align}\label{eq:log-sin-lambda}
		\left|\log\left(2\sin\frac{\pi \lambda_k}{K}\right)\right|\le \frac{\pi \log 2}{2}\cdot \frac{1}{\pi}\frac{M}{M^*}\le \frac{M}{M^*}.
	\end{align}
	Also, since $\log x\le x$ for $x\ge 1$, we have
	\begin{align*}
		\left|\log\frac{M^*}{M}\right|\le \frac{M}{M^*}.
	\end{align*}
	Overall,
	\begin{align*}
		|\Re({-2}\pi i R_{14})|\le \frac{1}{2}\max\left\{0,\left|\log\left(2\sin\frac{\pi \lambda_k}{K}\right)\right|,\left|\log\frac{M^*}{M}\right|\right\}\le \frac{M}{2M^*},
	\end{align*}
	as desired.
\end{proof}

\subsection{Evaluation of $R_2$}

We have, by also recalling \eqref{eq:sum=sum},
\begin{align*}
	R_2 = -\frac{1}{4\pi i k K} \sum_{\substack{1\le \rho\le k\\ \rho\equiv b\bmod{M^*}}}\sum_{\substack{1\le\mu\le k\\1\le\nu\le K}} \cos\frac{2\pi \mu\rho}{k}\sin\frac{2\pi \nu \lambda}{K}\cdot \cR_s.
\end{align*}
Due to the vanishing of $\sum\limits_{1\le\nu\le K}\sin\frac{2\pi \nu \lambda}{K}$, it is sufficient to merely consider terms in $\cR_s$ that involve $\nu$. Thus,
\begin{align*}
	R_2=R_{21}+R_{22}+R_{23},
\end{align*}
where
\begin{align*}
	R_{21}&:=-\frac{1}{4\pi i k K} \sum_{\substack{1\le \rho\le k\\ \rho\equiv b\bmod{M^*}}}\sum_{\substack{1\le\mu\le k\\1\le\nu\le K}} \cos\frac{2\pi \mu\rho}{k}\sin\frac{2\pi \nu \lambda}{K}\cdot\frac{2}{\pi}\dg{\frac{\mu}{k}}\dg{\frac{\nu}{K}},\\
	R_{22}&:=-\frac{1}{4\pi i k K} \sum_{\substack{1\le \rho\le k\\ \rho\equiv b\bmod{M^*}}}\sum_{\substack{1\le\mu\le k\\1\le\nu\le K}} \cos\frac{2\pi \mu\rho}{k}\sin\frac{2\pi \nu \lambda}{K}\cdot \frac{2\log z}{\pi}\dg{\frac{\nu}{K}},\\
	R_{23}&:=-\frac{1}{4\pi i k K} \sum_{\substack{1\le \rho\le k\\ \rho\equiv b\bmod{M^*}}}\sum_{\substack{1\le\mu\le k\\1\le\nu\le K}} \cos\frac{2\pi \mu\rho}{k}\sin\frac{2\pi \nu \lambda}{K}\cdot \frac{2}{\pi}\gamma_1\!\left(\frac{\nu}{K}\right).
\end{align*}

\begin{theorem}
	We have
	\begin{align}
		|\Re({-2}\pi i R_{21})|&\le 0.44X^{\frac{1}{2}}\log X + 1.3X^{\frac{1}{2}} + 0.25\log X+0.75\ll X^{\frac{1}{2}}\log X,\label{eq:R21}\\
		|\Re({-2}\pi i R_{22})|&\le \tfrac{1}{2}\log X+0.92\ll \log X,\label{eq:R22}\\
		|\Re({-2}\pi i R_{23})|&\le \tfrac{1}{4}\log X+\tfrac{1}{2}\tfrac{M}{(k,M)}+\tfrac{1}{2}\log\tfrac{M}{(k,M)}+\log\Gamma(\tfrac{(k,M)}{M})+2.59\ll \log X.\label{eq:R23}
	\end{align}
\end{theorem}

\begin{proof}
	To evaluate $R_{21}$, we first define
	\begin{align*}
		R_{211}:=-\frac{1}{4\pi i k K} \sum_{\substack{1\le \rho< k\\ \rho\equiv b\bmod{M^*}}}\sum_{\substack{1\le\mu\le k\\1\le\nu\le K}} \cos\frac{2\pi \mu\rho}{k}\sin\frac{2\pi \nu \lambda}{K}\cdot\frac{2}{\pi}\dg{\frac{\mu}{k}}\dg{\frac{\nu}{K}}.
	\end{align*}
	Here the difference between $R_{21}$ and $R_{211}$ is that in the latter, the range of $\rho$ is reduced to $1\le \rho<k$. For \textit{Case 1} as presented in Section \ref{sec:res-lemmas}, $\rho$ cannot take the value $k$. For \textit{Case 3}, although $\rho$ may take the value $k$, the corresponding $\lambda$ of $\rho=k$ is $K$, so we have the vanishing of $\sin\frac{2\pi \nu K}{K}$. Thus,
	\begin{align}
		R_{21}=R_{211}\qquad{\text{(\textit{Cases 1 \& 3})}}.
	\end{align}
	For \textit{Case 2}, we have an additional contribution from $(\lambda,\rho)=(\lambda_k,k)$, which equals
	\begin{align*}
		R_{212}:=-\frac{1}{4\pi i k K} \sum_{\substack{1\le\mu\le k\\1\le\nu\le K}} \sin\frac{2\pi \nu \lambda_k}{K}\cdot\frac{2}{\pi}\dg{\frac{\mu}{k}}\dg{\frac{\nu}{K}}.
	\end{align*}
	Thus,
	\begin{align}
		R_{21}=R_{211}+R_{212}\qquad{\text{(\textit{Case 2})}}.
	\end{align}
	Notice that
	\begin{align}\label{eq:R211-eva}
		R_{211}&=-\frac{1}{2i \pi^2 k K} \sum_{\substack{1\le \rho< k\\ \rho\equiv b\bmod{M^*}}}\sum_{1\le\mu\le k} \cos\frac{2\pi \mu\rho}{k}\dg{\frac{\mu}{k}}\sum_{1\le\nu\le K}\sin\frac{2\pi \nu \lambda}{K}\dg{\frac{\nu}{K}}\notag\\
		\text{\tiny (by $\substack{\eqref{eq:sum-cos-gamma}\\ \eqref{eq:sum-sin-gamma}}$)}&=-\frac{1}{2i \pi^2 k K} \sum_{\substack{1\le \rho< k\\ \rho\equiv b\bmod{M^*}}}k \log\left(2\sin\frac{\pi \rho}{k}\right)\cdot \frac{\pi}{2}(2\lambda-K)\notag\\
		&=-\frac{1}{4\pi i K}\sum_{\substack{1\le \rho< k\\ \rho\equiv b\bmod{M^*}}}(2\lambda-K)\log\left(2\sin\frac{\pi \rho}{k}\right).
	\end{align}
	In light of \eqref{eq:log-sin-bound} together with the fact that $|2\lambda-K|\le K$, we have
	\begin{align*}
		|\Re({-2}\pi i R_{211})|&\le \frac{1}{2K}\cdot K\cdot 2\sum_{1\le \rho< k}\frac{\pi \log 2}{2}\frac{k}{\pi \rho}\\
		&=\frac{\log 2}{2}k\sum_{1\le \rho< k}\frac{1}{\rho}\\
		&\le \frac{\log 2}{2}\sqrt{2\pi X}(\log \sqrt{2\pi X}+\gamma).
	\end{align*}
	Also,
	\begin{align}
		R_{212}&=-\frac{1}{2i \pi^2 k K} \sum_{1\le\mu\le k} \dg{\frac{\mu}{k}}\sum_{1\le\nu\le K}\sin\frac{2\pi \nu \lambda_k}{K}\dg{\frac{\nu}{K}}\notag\\
		\text{\tiny (by $\substack{\eqref{eq:sum-sin-gamma}\\ \eqref{eq:sum-gamma}}$)}&= -\frac{1}{2i \pi^2 k K} \cdot \big({-k}(\gamma+\log k)\big) \cdot \frac{\pi}{2}(2\lambda_k-K)\notag\\
		&=\frac{1}{4\pi i K}(\gamma+\log k)(2\lambda_k-K).
	\end{align}
	Thus,
	\begin{align*}
		|\Re({-2}\pi i R_{211})|\le \frac{1}{2}(\log \sqrt{2\pi X}+\gamma).
	\end{align*}
	Overall,
	\begin{align*}
		|\Re({-2}\pi i R_{21})|\le \frac{\log 2}{2}\sqrt{2\pi X}(\log \sqrt{2\pi X}+\gamma)+\frac{1}{2}(\log \sqrt{2\pi X}+\gamma),
	\end{align*}
	thereby confirming \eqref{eq:R21}.
	
	For $R_{22}$, it is also easily seen that
	\begin{align}
		R_{22}=0\qquad{\text{(\textit{Cases 1 \& 3})}}.
	\end{align}
	Considering \textit{Case 2}, there is an extra contribution from $(\lambda,\rho)=(\lambda_k,k)$, given by
	\begin{align*}
		R_{22}&=-\frac{\log z}{2i \pi^2 k K} \left(\sum_{1\le\mu\le k} 1\right)\!\!\left(\sum_{1\le\nu\le K}\sin\frac{2\pi \nu \lambda_k}{K}\dg{\frac{\nu}{K}}\right)\\
		\text{\tiny (by $\eqref{eq:sum-sin-gamma}$)}&=-\frac{\log z}{2i \pi^2 k K}\cdot k\cdot \frac{\pi}{2}(2\lambda_k-K).
	\end{align*}
	Thus,
	\begin{align}
		R_{22}=-\frac{2\lambda_k-K}{4\pi i K}\log z\qquad{\text{(\textit{Case 2})}}.
	\end{align}
	Overall, we have, by also recalling \eqref{eq:Re-log-z},
	\begin{align*}
		|\Re({-2}\pi i R_{22})|\le \frac{1}{2K}\cdot K\cdot |\Re(\log z)|\le \frac{1}{2}\log(2\pi X),
	\end{align*}
	which gives \eqref{eq:R22}.
	
	In the same vein, it holds that
	\begin{align}
		R_{23}=0\qquad{\text{(\textit{Cases 1 \& 3})}}.
	\end{align}
	For \textit{Case 2}, we get from the extra contribution from $(\lambda,\rho)=(\lambda_k,k)$ that
	\begin{align*}
		R_{23}&=-\frac{1}{2i \pi^2 k K} \left(\sum_{1\le\mu\le k} 1\right)\!\!\left(\sum_{1\le\nu\le K}\sin\frac{2\pi \nu \lambda_k}{K}\gamma_1\!\left(\frac{\nu}{K}\right)\right)\\
		\text{\tiny (by $\eqref{eq:sum-Sti}$)}&=-\frac{1}{2i \pi^2 k K}\cdot k\cdot \Bigg[\frac{\pi}{2}\big(\gamma+\log(2\pi K)\big)(2\lambda_k-K)-\frac{\pi \log(2\pi)}{2}K\notag\\
		&\quad+\frac{\pi K}{2}\log\left(2\sin\frac{\pi \lambda_k}{K}\right)+\pi K \log \Gamma\left(\frac{\lambda_k}{K}\right)\Bigg].
	\end{align*}
	Thus,
	\begin{align}
		R_{23}&=-\frac{1}{4\pi i K}\Bigg[\big(\gamma+\log(2\pi K)\big)(2\lambda_k-K)-K\log(2\pi)\notag\\
		&\quad+K\log\left(2\sin\frac{\pi \lambda_k}{K}\right)+2K\log \Gamma\left(\frac{\lambda_k}{K}\right)\Bigg]\qquad{\text{(\textit{Case 2})}}.
	\end{align}
	Overall,
	\begin{align*}
		|\Re({-2}\pi i R_{23})|&\le \frac{1}{2}\Bigg[\gamma+\log(2\pi K)+\log(2\pi)+\left|\log\left(2\sin\frac{\pi \lambda_k}{K}\right)\right|+2\left|\log \Gamma\left(\frac{\lambda_k}{K}\right)\right|\Bigg]\\
		&\le \frac{1}{2}\Bigg[\gamma+2\log(2\pi)+\log \sqrt{2\pi X}+\log\frac{M}{M^*}+\frac{M}{M^*}+2\log\Gamma\left(\frac{M^*}{M}\right)\Bigg],
	\end{align*}
	where we recall that $K=k\frac{M}{M^*}$ with $1\le k\le \sqrt{2\pi X}$ and that $\frac{M^*}{M}\le \frac{\lambda_k}{K}\le 1-\frac{M^*}{M}$, while \eqref{eq:log-sin-lambda} is also used. Therefore, we arrive at \eqref{eq:R23}.
\end{proof}

\subsection{Evaluation of $R_3$}

We have
\begin{align*}
	R_3 = \frac{1}{4\pi k K} \sum_{\substack{1\le \rho\le k\\ \rho\equiv b\bmod{M^*}}} \sum_{\substack{1\le\mu\le k\\1\le\nu\le K}} \sin\frac{2\pi \mu\rho}{k}\sin\frac{2\pi \nu \lambda}{K}\cdot \cR_s.
\end{align*}
We may further reduce the range of $\rho$ to $1\le \rho<k$ due to the vanishing of $\sin\frac{2\pi \mu\rho}{k}$ when $\rho=k$. Meanwhile, according to Remark \ref{rmk:3-cases}, for each $\rho$ in this reduced range, the corresponding $\lambda$ is not $K$. Thus, we only need to take into account terms in $\cR_s$ that involve both $\mu$ and $\nu$. It follows that
\begin{align*}
	R_3 = \frac{1}{4\pi k K} \sum_{\substack{1\le \rho< k\\ \rho\equiv b\bmod{M^*}}} \sum_{\substack{1\le\mu\le k\\1\le\nu\le K}} \sin\frac{2\pi \mu\rho}{k}\sin\frac{2\pi \nu \lambda}{K}\cdot \frac{2}{\pi}\dg{\frac{\mu}{k}}\dg{\frac{\nu}{K}}.
\end{align*}
Since this summation is purely real, we directly obtain the following relation.

\begin{theorem}
	We have
	\begin{align}
		|\Re({-2}\pi i R_{3})|=0.\label{eq:R3}
	\end{align}
\end{theorem}

\begin{remark}
	It is worth pointing out that $R_{3}$ may be further evaluated as follows:
	\begin{align}
		R_3&=\frac{1}{2\pi^2 k K} \sum_{\substack{1\le \rho< k\\ \rho\equiv b\bmod{M^*}}} \sum_{1\le \mu\le k} \sin\frac{2\pi \mu\rho}{k} \dg{\frac{\mu}{k}}\sum_{1\le \nu \le K}\sin\frac{2\pi \nu \lambda}{K} \dg{\frac{\nu}{K}}\notag\\
		\text{\tiny (by $\eqref{eq:sum-sin-gamma}$)}&=\frac{1}{2\pi^2 k K} \sum_{\substack{1\le \rho< k\\ \rho\equiv b\bmod{M^*}}} \frac{\pi}{2}(2\rho-k)\cdot \frac{\pi}{2}(2\lambda-K)\notag\\
		&=\frac{1}{8 k K} \sum_{\substack{1\le \rho< k\\ \rho\equiv b\bmod{M^*}}} (2\rho-k)(2\lambda-K).
	\end{align}
\end{remark}

\subsection{Evaluation of $R_4$}

We have
\begin{align*}
	R_4=-\frac{1}{4\pi k K} \sum_{\substack{1\le \rho\le k\\ \rho\equiv b\bmod{M^*}}} \sum_{\substack{1\le\mu\le k\\1\le\nu\le K}} \sin\frac{2\pi \mu\rho}{k}\cos\frac{2\pi \nu \lambda}{K}\cdot \cR_c.
\end{align*}
Similar to the analysis for $R_3$, it holds that
\begin{align*}
	R_4=R_{41}+R_{42},
\end{align*}
where
\begin{align*}
	R_{41}&:=-\frac{1}{4\pi k K} \sum_{\substack{1\le \rho< k\\ \rho\equiv b\bmod{M^*}}} \sum_{\substack{1\le\mu\le k\\1\le\nu\le K}} \sin\frac{2\pi \mu\rho}{k}\cos\frac{2\pi \nu \lambda}{K}\cdot \frac{2\zrie{2,\frac{\mu}{k}}\zrie{0,\frac{\nu}{K}}}{\pi z},\\
	R_{42}&:=-\frac{1}{4\pi k K} \sum_{\substack{1\le \rho< k\\ \rho\equiv b\bmod{M^*}}} \sum_{\substack{1\le\mu\le k\\1\le\nu\le K}} \sin\frac{2\pi \mu\rho}{k}\cos\frac{2\pi \nu \lambda}{K}\cdot \left(-\frac{2z\zrie{0,\frac{\mu}{k}}\zrie{2,\frac{\nu}{K}}}{\pi}\right).
\end{align*}

\begin{theorem}
	We have
	\begin{equation}\label{eq:R41}
		{-2}\pi i R_{41} =\begin{cases}
			0 & \text{if $b= (k,M)$},\\
			-\frac{1}{\tau}\frac{(k,M)^2}{M}\frac{2\pi i}{k^2}\bigg(\zeta'\left(-1,\frac{b}{(k,M)}\right)-\zeta'\left(-1,\frac{(k,M)-b}{(k,M)}\right)\bigg) & \text{if $b\ne (k,M)$}.
		\end{cases}
	\end{equation}
	Also,
	\begin{equation}\label{eq:R42}
		|\Re({-2}\pi i R_{42})|\le \tfrac{1}{12}\tfrac{M}{(k,M)}\log X +0.25\tfrac{M}{(k,M)}\ll \log X.
	\end{equation}
\end{theorem}

\begin{proof}
	For $R_{41}$, we have
	\begin{align*}
		R_{41}&=-\frac{1}{z}\frac{1}{2\pi^2k K}\sum_{\substack{1\le \rho< k\\ \rho\equiv b\bmod{M^*}}} \sum_{1\le\mu\le k}\sin\frac{2\pi \mu\rho}{k}\zrie{2,\frac{\mu}{k}} \sum_{1\le\nu\le K}\cos\frac{2\pi \nu \lambda}{K}\zrie{0,\frac{\nu}{K}}\\
		\text{\tiny (by $\substack{\eqref{eq:sum-cos-zeta-0}\\ \eqref{eq:sum-sin-zeta-2}}$)}&=-\frac{1}{z}\frac{1}{2\pi^2k K}\sum_{\substack{1\le \rho< k\\ \rho\equiv b\bmod{M^*}}} 2\pi k^2\Bigg(\zeta'\left(-1,\frac{\rho}{k}\right)-\zeta'\left(-1,1-\frac{\rho}{k}\right)\Bigg)\cdot \left(-\frac{1}{2}\right)\\
		&=\frac{1}{z}\frac{k}{2\pi K}\sum_{\substack{1\le \rho< k\\ \rho\equiv b\bmod{M^*}}} \Bigg(\zeta'\left(-1,\frac{\rho}{k}\right)-\zeta'\left(-1,1-\frac{\rho}{k}\right)\Bigg).
	\end{align*}
	If $b=M^*$, then both $\rho$ and $k-\rho$ run through all multiples of $M^*$ within the range $[1,k)$, and thus,
	\begin{equation}\label{eq:R41-case1}
		R_{41} = 0.
	\end{equation}
	We further notice that if $d\mid k$ and $1\le c\le d$, then for any $s\ne 1$,
	\begin{equation*}
		\sum_{\substack{1\le \ell\le k\\\ell\equiv c\bmod{d}}}\zrie{s,\frac{\ell}{k}}=\left(\frac{k}{d}\right)^s\zrie{s,\frac{c}{d}}.
	\end{equation*}
	Therefore,
	\begin{equation*}
		\sum_{\substack{1\le \ell< k\\\ell\equiv c\bmod{d}}}\zeta'\left(s,\frac{\ell}{k}\right)=\left(\frac{k}{d}\right)^s\zrie{s,\frac{c}{d}}\log\left(\frac{k}{d}\right)+\left(\frac{k}{d}\right)^s\zeta'\left(s,\frac{c}{d}\right).
	\end{equation*}
	As $M^*=(k,M)$ divides $k$, it follows that if $b\ne M^*$ so that $\rho\ne k$,
	\begin{align}\label{eq:R41-case2}
		R_{41}&=\frac{1}{z}\frac{k}{2\pi K}\Bigg(\left(\frac{M^*}{k}\right)\zrie{-1,\frac{b}{M^*}}\log\frac{k}{M^*}+\left(\frac{M^*}{k}\right)\zeta'\left(-1,\frac{b}{M^*}\right)\notag\\
		&\quad-\left(\frac{M^*}{k}\right)\zrie{-1,\frac{M^*-b}{M^*}}\log\frac{k}{M^*}-\left(\frac{M^*}{k}\right)\zeta'\left(-1,\frac{M^*-b}{M^*}\right)\Bigg)\notag\\
		&=\frac{1}{z}\frac{(k,M)^2}{M}\frac{1}{2\pi k}\Bigg(\zeta'\left(-1,\frac{b}{M^*}\right)-\zeta'\left(-1,\frac{M^*-b}{M^*}\right)\Bigg)\notag\\
		&=\frac{1}{\tau}\frac{(k,M)^2}{M}\frac{1}{k^2}\Bigg(\zeta'\left(-1,\frac{b}{(k,M)}\right)-\zeta'\left(-1,\frac{(k,M)-b}{(k,M)}\right)\Bigg).
	\end{align}
	Combining the two cases \eqref{eq:R41-case1} and \eqref{eq:R41-case2} gives \eqref{eq:R41}.
	
	To evaluate $R_{42}$, we find that
	\begin{align}
		R_{42}&= z \frac{1}{2\pi^2k K}\sum_{\substack{1\le \rho< k\\ \rho\equiv b\bmod{M^*}}} \sum_{1\le\mu\le k}\sin\frac{2\pi \mu\rho}{k}\zrie{0,\frac{\mu}{k}} \sum_{1\le\nu\le K}\cos\frac{2\pi \nu \lambda}{K}\zrie{2,\frac{\nu}{K}}\notag\\
		\text{\tiny (by $\substack{\eqref{eq:sum-cos-zeta-2}\\ \eqref{eq:sum-sin-zeta-0}}$)}&=z \frac{1}{2\pi^2k K}\sum_{\substack{1\le \rho< k\\ \rho\equiv b\bmod{M^*}}} \frac{1}{2}\cot\frac{\pi\rho}{k} \cdot \frac{\pi^2}{6}(6\lambda^2-6K\lambda+K^2)\notag\\
		&=\tau\frac{1}{48\pi K}\sum_{\substack{1\le \rho< k\\ \rho\equiv b\bmod{M^*}}} (6\lambda^2-6K\lambda+K^2) \cot\frac{\pi\rho}{k}.
	\end{align}
	Notice that $|6\lambda^2-6K\lambda+K^2|\le K^2$ and that $K=k\frac{M}{M^*}$ with $1\le k\le N\le \sqrt{2\pi X}$. Also, for $0<x< \pi$,
	$$|\cot x|\le \frac{1}{x}+\frac{1}{\pi-x}.$$
	Thus,
	\begin{align*}
		|\Re({-2}\pi i R_{42})|&=|\Im(\tau)|\frac{1}{24K}\left|\sum_{\substack{1\le \rho< k\\ \rho\equiv b\bmod{M^*}}} (6\lambda^2-6K\lambda+K^2) \cot\frac{\pi\rho}{k}\right|\\
		&\le \frac{2\pi}{kN} \frac{1}{24K}\cdot K^2\cdot 2\sum_{1\le \ell<k}\frac{k}{\pi \ell}\\
		&\le \frac{1}{6}\frac{M}{M^*}(\log \sqrt{2\pi X}+\gamma),
	\end{align*}
	which gives \eqref{eq:R42}. 
\end{proof}

\section{Explicit bounds of $G(q)$ on the minor arcs}\label{sec:G-minor}

Recall that
$$G(q)=\frac{(q^3;q^4)_\infty}{(q;q^4)_\infty (q^6;q^8)_\infty}.$$
The object of this section is the following uniform bound of $|G(q)|$ when $q$ is away from $\pm 1$.

\begin{theorem}\label{th:bound-away}
	Let $\mathcal{Q}_{h/k}$ be as in Remark \ref{rmk:Q_h/k}. For any $q$ with $|q|=e^{-\frac{1}{X}}$ such that it is not in $\mathcal{Q}_{1/1}$ and $\mathcal{Q}_{1/2}$, we have, if $X\ge 3.4\times 10^7$, then
	\begin{align}\label{eq:exp-bound}
	|G(q)|\le \exp\Bigg(\left(\frac{\pi^2}{48}-\frac{1}{100}\right)X\Bigg).
	\end{align}
	Also, if $q=e^{-\tau+\frac{2\pi i h}{k}}$ with $\tau=X^{-1}+2\pi i Y$ is in $\mathcal{Q}_{1/1}$ or $\mathcal{Q}_{1/2}$, then \eqref{eq:exp-bound} still holds under the assumption $X\ge 3.4\times 10^7$ provided that $|Y|\ge \frac{1}{2\pi X}$.
\end{theorem}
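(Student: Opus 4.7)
My plan is to reduce Theorem~\ref{th:bound-away} to Theorem~\ref{th:main-est} applied to each of the three factors making up $G(q)$. Since
\begin{equation*}
\log G(q) = \Phi_{1,4}(q) + \Phi_{6,8}(q) - \Phi_{3,4}(q),
\end{equation*}
Theorem~\ref{th:main-est} produces, on every Farey arc $\mathcal{Q}_{h/k}$, an expansion
\begin{equation*}
\Re \log G(q) = \Re\!\left(\frac{A_{k,h} + i B_{k,h}}{\tau}\right) + O(X^{1/2}\log X),
\end{equation*}
where $A_{k,h}$ collects the $\pi^2$--contributions and $B_{k,h}$ the $\zeta'(-1,\cdot)$--contributions, both being explicit real constants assembled from the three residues $b$ modulo $(k,M)$ for $M\in\{4,8\}$.

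The first step is a uniform bound in $Y$. Substituting $u = 2\pi YX$ converts the main term into $X(A+Bu)/(1+u^2)$, whose supremum over $u\in\mathbb{R}$ equals $\tfrac{X}{2}\bigl(A+\sqrt{A^2+B^2}\bigr)$ by an elementary calculus computation at the critical point $u=(-A+\sqrt{A^2+B^2})/B$. It therefore suffices to verify
\begin{equation*}
\tfrac{1}{2}\bigl(A_{k,h}+\sqrt{A_{k,h}^2+B_{k,h}^2}\bigr) \le \tfrac{\pi^2}{48} - \tfrac{1}{50}
\end{equation*}
for every admissible Farey pair $(h,k)$ outside $\{(1,1),(1,2)\}$, the $1/50$ gap being reserved to absorb the error term.

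The case analysis proceeds by $\gcd(k,8)$. For odd $k$, the coprimality $(k,4)=(k,8)=1$ forces $b=b^*=1$ in every $\Phi_{a,M}$, so $B_{k,h}=0$ and a short computation yields $A_{k,h}=\pi^2/(48k^2)$; for $k\ge 3$ this is at most $\pi^2/432$, comfortably below the target. For $(h,k)\in\{(1,1),(1,2)\}$ one finds the saturating values $A_{k,h}=\pi^2/48$, $B_{k,h}=0$, so the hypothesis $|Y|\ge 1/(2\pi X)$ gives $\Re(1/\tau)\le X/2$ and hence $\Re\log G(q)\le \pi^2X/96 + O(X^{1/2}\log X)$, well inside the target. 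For the remaining even $k\ge 4$ one exploits the decay $(k,M)^2/(k^2M)\le \gcd(k,M)/k^2$, which, combined with $|P|\le 1/6$ and a uniform numerical bound on $\zeta'(-1,\cdot)$ over $(0,1]$, forces the inequality once $k$ exceeds a small threshold; a handful of residue classes ($k\in\{4,6,8\}$ modulo the appropriate symmetries) remain and are checked by direct evaluation using the values of $\zeta'(-1,j/d)$ at $d\in\{2,4,8\}$.

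The main obstacle will be the small even cases $k=4$ and $k=8$, where $B_{k,h}$ may be comparable in size to $|A_{k,h}|$ so that $\sqrt{A_{k,h}^2+B_{k,h}^2}$ is not dominated by $|A_{k,h}|$; the target inequality must then be verified by precise numerical evaluation of $\zeta'(-1,\cdot)$. A secondary technical obstacle is to track the explicit constants from \eqref{eq:bound-integral} and \eqref{eq:R11}--\eqref{eq:R42} through the linear combination to confirm that the aggregated error satisfies $O(X^{1/2}\log X)\le X/100$ at $X\ge 3.4\times 10^7$, which converts the above $1/50$ gap into the final bound $\bigl(\tfrac{\pi^2}{48}-\tfrac{1}{100}\bigr)X$.
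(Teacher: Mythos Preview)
Your proposal is correct and follows essentially the same approach as the paper: apply Theorem~\ref{th:main-est} to each of the three factors of $G(q)$, split into cases according to $\gcd(k,8)$, bound $\Re(\Main_G)$ in each case, and combine with the explicit error estimates \eqref{eq:bound-integral}, \eqref{eq:R12}--\eqref{eq:R42}. The only cosmetic difference is that the paper computes $\Main_G$ explicitly in its four cases (finding that $B_{k,h}=0$ except when $k\equiv 4\pmod 8$, owing to an exact cancellation of the $\zeta'(-1,\cdot)$ terms when $8\mid k$, and handling the surviving case by a quadratic-discriminant argument equivalent to your supremum formula $\tfrac12\bigl(A+\sqrt{A^2+B^2}\bigr)$), whereas you treat all cases uniformly via that formula and defer the small even $k$ to direct evaluation.
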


Noticing that $\tau=X^{-1}+2\pi i Y$, we have
\begin{align}\label{eq:tau^-1}
\tau^{-1}=\frac{X^{-1}}{X^{-2}+4\pi^2 Y^2}-i\frac{2\pi Y}{X^{-2}+4\pi^2 Y^2}.
\end{align}
In the sequel, $b$ is written as $b(h,a,k,M)$ to avoid confusion. We also define
\begin{align}
\Main_{a,M}&:=\frac{1}{\tau}\frac{(k,M)^2}{k^2 M}\Bigg(\pi^2\left(\frac{ b^2}{(k,M)^2}-\frac{ b}{(k,M)}+\frac{1}{6}\right)\nonumber\\
&\;\quad+2\pi i\left(-\zeta'\left(-1,\frac{b}{(k,M)}\right)+\zeta'\left(-1,\frac{b^*}{(k,M)}\right)\right)\Bigg),
\end{align}
which is the main term in \eqref{eq:th-main}. Further,
\begin{equation}
\Main_G:=\Main_{1,4}-\Main_{3,4}+\Main_{6,8}
\end{equation}
denotes the main term of $\log G(q)$, whereas
\begin{equation}
E_G:=\log G(q)-\Main_G
\end{equation}
denotes the error term.

\subsection{Case 1: $k\in 2\mathbb{N}+1$}

Notice that $(k,4)=1$. Hence, we always have $b(h,1,k,4)=b(h,3,k,4)=1$. Also, $(k,8)=1$ so that $b(h,6,k,8)=1$. It is not hard to compute that
\begin{align}\label{eq:M_1}
\Main_G = \frac{1}{\tau}\frac{\pi^2}{48 k^2}.
\end{align}
By \eqref{eq:tau^-1},
\begin{equation}
\Re(\Main_G)\le \frac{\pi^2}{48 k^2} X.
\end{equation}
We may also compute from the bounds in \eqref{eq:bound-integral}, \eqref{eq:R12}, \eqref{eq:R13}, \eqref{eq:R14}, \eqref{eq:R21}, \eqref{eq:R22}, \eqref{eq:R23}, \eqref{eq:R3} and \eqref{eq:R42} that
\begin{align}
|\Re(E_G)|\le 1.32X^{\frac{1}{2}}\log X+506.83 X^{\frac{1}{2}}+5.84\log X+79.54+0.67X^{-1}.
\end{align}

\subsection{Case 2: $k\in 4\mathbb{N}+2$}

Notice that $(k,4)=2$. Since $(h,k)=1$ while $k$ is even, $h$ is then odd. Hence, we always have $b(h,1,k,4)=b(h,3,k,4)=1$. Also, $(k,8)=1$ and thus $b(h,6,k,8)=2$. It is not hard to compute that
\begin{align}\label{eq:M_2}
\Main_G = \frac{1}{\tau}\frac{\pi^2}{12 k^2}.
\end{align}
Further, we derive from \eqref{eq:tau^-1} that
\begin{equation}
\Re(\Main_G)\le \frac{\pi^2}{12 k^2} X.
\end{equation}
For the error term $E_G$, we have
\begin{align}
|\Re(E_G)|\le 1.32X^{\frac{1}{2}}\log X+92.81 X^{\frac{1}{2}}+5.17\log X+38.09+0.67X^{-1}.
\end{align}

\subsection{Case 3: $k\in 8\mathbb{N}+4$}

Notice that $(k,4)=4$. If $h\equiv 1 \pmod{4}$, then $b(h,1,k,4)=3$ and $b(h,3,k,4)=1$. If $h\equiv 3 \pmod{4}$, then $b(h,1,k,4)=1$ and $b(h,3,k,4)=3$. Therefore,
\begin{align*}
\Main_{1,4}-\Main_{3,4}=\frac{1}{\tau}\frac{16\pi i \chi(h)}{k^2}\left(\zeta'\left(-1,\tfrac{1}{4}\right)-\zeta'\left(-1,\tfrac{3}{4}\right)\right),
\end{align*}
where
$$\chi(h)=\begin{cases}
1 & \text{if $h\equiv 1 \pmod{4}$},\\
-1 & \text{if $h\equiv 3 \pmod{4}$}.
\end{cases}$$
Also, $(k,8)=4$ so that $k$ is even. Then $h$ is odd as $(h,k)=1$. Now we have $b(h,6,k,8)=2$. It follows that
\begin{equation*}
\Main_{6,8}=-\frac{1}{\tau}\frac{\pi^2}{6k^2}.
\end{equation*}
As a consequence,
\begin{align}
\Main_G = \frac{1}{\tau}\Bigg({-\frac{\pi^2}{6k^2}}+\frac{16\pi i \chi(h)}{k^2}\left(\zeta'\left(-1,\tfrac{1}{4}\right)-\zeta'\left(-1,\tfrac{3}{4}\right)\right)\Bigg).
\end{align}
On the other hand, by \eqref{eq:tau^-1},
\begin{align*}
\Re(\Main_G)&=-\frac{\pi^2}{6k^2}\frac{X^{-1}}{X^{-2}+4\pi^2 Y^2}+\frac{16\pi \chi(h)}{k^2}\left(\zeta'\left(-1,\tfrac{1}{4}\right)-\zeta'\left(-1,\tfrac{3}{4}\right)\right)\frac{2\pi Y}{X^{-2}+4\pi^2 Y^2}\\
&\le \frac{1}{k^2}\cdot \frac{-\frac{\pi^2}{6}X^{-1}+16\pi \left(\zeta'\left(-1,\frac{1}{4}\right)-\zeta'\left(-1,\frac{3}{4}\right)\right)2\pi |Y|}{X^{-2}+4\pi^2 |Y|^2}\\
&= \frac{\pi^2}{6 k^2}\cdot \frac{-X^{-1}+192\left(\zeta'\left(-1,\frac{1}{4}\right)-\zeta'\left(-1,\frac{3}{4}\right)\right)|Y|}{X^{-2}+4\pi^2 |Y|^2}.
\end{align*}
We claim that
\begin{align}\label{eq:G-case-3}
\Re(\Main_G)\le \frac{2.94}{k^2}X.
\end{align}
For this purpose, it is sufficient to prove that
$$\frac{\pi^2}{6 k^2}\cdot \frac{-X^{-1}+192\left(\zeta'\left(-1,\frac{1}{4}\right)-\zeta'\left(-1,\frac{3}{4}\right)\right)|Y|}{X^{-2}+4\pi^2 |Y|^2}\le \frac{2.94}{k^2}X.$$
Namely,
\begin{align*}
70.56 X |Y|^2 - 192\left(\zeta'\left(-1,\tfrac{1}{4}\right)-\zeta'\left(-1,\tfrac{3}{4}\right)\right)|Y|+\left(\frac{17.64}{\pi^2}+1\right)X^{-1}\ge 0.
\end{align*}
Notice that on the left-hand side if we replace $|Y|$ by $t$ and treat it as a quadratic function of real $t$, then it reaches the minimum when
$$t=\frac{192\left(\zeta'\left(-1,\frac{1}{4}\right)-\zeta'\left(-1,\frac{3}{4}\right)\right)}{2\times 70.56 X}.$$
Now \eqref{eq:G-case-3} holds as the minimum is
$$-70.56 X \left(\frac{192\left(\zeta'\left(-1,\frac{1}{4}\right)-\zeta'\left(-1,\frac{3}{4}\right)\right)}{2\times 70.56 X}\right)^2+\left(\frac{17.64}{\pi^2}+1\right)X^{-1}\ge 0.01X^{-1}\ge 0.$$
Finally, for the error term $E_G$, we have
\begin{align}
|\Re(E_G)|\le 1.32X^{\frac{1}{2}}\log X+19.62 X^{\frac{1}{2}}+4.84\log X+23.24+0.67X^{-1}.
\end{align}

\subsection{Case 4: $k\in 8\mathbb{N}+8$}

As in \textit{Case 3}, we still have
\begin{align*}
\Main_{1,4}-\Main_{3,4}=\frac{1}{\tau}\frac{16\pi i \chi(h)}{k^2}\left(\zeta'\left(-1,\tfrac{1}{4}\right)-\zeta'\left(-1,\tfrac{3}{4}\right)\right).
\end{align*}
Also, $(k,8)=8$. If $h\equiv 1 \pmod{4}$, then $b(h,6,k,8)=2$, while if $h\equiv 3 \pmod{4}$, then $b(h,6,k,8)=6$. Hence,
\begin{align*}
\Main_{6,8}=\frac{1}{\tau}\Bigg({-\frac{\pi^2}{6k^2}}-\frac{16\pi i \chi(h)}{k^2}\left(\zeta'\left(-1,\tfrac{1}{4}\right)-\zeta'\left(-1,\tfrac{3}{4}\right)\right)\Bigg).
\end{align*}
In consequence,
\begin{equation}
\Main_G=-\frac{1}{\tau}\frac{\pi^2}{6k^2}.
\end{equation}
Further,
\begin{equation}
\Re(\Main_G)< 0.
\end{equation}
For the error term $E_G$, we have
\begin{align}
|\Re(E_G)|\le 1.32X^{\frac{1}{2}}\log X+12.04 X^{\frac{1}{2}}+4.75\log X+19.94+0.67X^{-1}.
\end{align}

\subsection{Proof of Theorem \ref{th:bound-away}}

We start by noticing that
\begin{align*}
	\log |G(q)|=\Re(\log G(q))\le \Re(\Main_G)+|\Re(E_G)|.
\end{align*}
Now the first part of Theorem \ref{th:bound-away} simply follows from a direct computation by taking into account the bounds for $\Re(\Main_G)$ and $|\Re(E_G)|$. For the second part, we notice from \eqref{eq:tau^-1} that, when $|Y|\ge \frac{1}{2\pi X}$,
$$\Re(\tau^{-1})\le \frac{X}{2}.$$
Whenever $q$ is in $\mathcal{Q}_{1/1}$ or $\mathcal{Q}_{1/2}$ with $|Y|\ge \frac{1}{2\pi X}$, we apply \eqref{eq:M_1} and \eqref{eq:M_2} to obtain the bound
$$\Re(\Main_G)\le \frac{\pi^2}{48}\frac{X}{2}.$$
Hence, \eqref{eq:exp-bound} follows by inserting the contribution of the error term and carrying on a routine computation.

\section{Precise approximations of $G(q)$ near the dominant poles}\label{sec:G-major}

Recall that
\begin{align}\label{eq:G(q)}
G(q) = \frac{1}{(q,-q^3;q^4)_\infty}.
\end{align}
From the analysis in the previous section, we know that $G(q)$ indeed has dominant poles at $q=\pm 1$. In fact, if $q=e^{-\tau+\frac{2\pi i h}{k}}$ is in $\mathcal{Q}_{1/1}$ or $\mathcal{Q}_{1/2}$, then \eqref{eq:M_1} and \eqref{eq:M_2} tell us that $\log G(q)$ is dominated by $\frac{\pi^2}{48\tau}$ while the coefficient $\frac{\pi^2}{48}$ is the largest in comparison with that of other $\mathcal{Q}_{h/k}$. Now we want to give some more precise approximations of $\log G(q)$ near the dominant poles.

\begin{theorem}
	Let $\tau=X^{-1}+2\pi i Y$ with $|Y|\le \frac{1}{2\pi X}$. Then
	\begin{equation}\label{eq:near:q=1}
	\log G(e^{-\tau})=\frac{\pi^2}{48}\frac{1}{\tau}-\frac{1}{4}\log \tau-\frac{3}{4}\log 2-\frac{1}{2}\log \pi+\log \Gamma(\tfrac{1}{4})+E_+,
	\end{equation}
	where
	\begin{equation}
	|E_+|\le 0.66 X^{-\frac{3}{4}}.
	\end{equation}
	Also,
	\begin{equation}\label{eq:near:q=-1}
	\log G(-e^{-\tau})=\frac{\pi^2}{48}\frac{1}{\tau}+\frac{1}{4}\log \tau-\frac{1}{4}\log 2-\frac{1}{2}\log \pi+\log \Gamma(\tfrac{3}{4})+E_-,
	\end{equation}
	where
	\begin{equation}
	|E_-|\le 0.82 X^{-\frac{3}{4}}.
	\end{equation}
\end{theorem}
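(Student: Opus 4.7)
The plan is to realise $\log G(e^{-\tau})$ and $\log G(-e^{-\tau})$ as Mellin--Barnes integrals, then shift the line of integration past the two leading singularities, capturing the explicit main term and reducing the error to a single remainder integral bounded by $X^{-3/4}$. I sketch the argument for \eqref{eq:near:q=1}; the second formula is entirely analogous.

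\textbf{Integral representation.} I would first expand $-\log(q;q^4)_\infty$ and $-\log(-q^3;q^4)_\infty$ into Lambert double series (the second producing the alternating sign $(-1)^\ell$ coming from $-\log(1+x)$), obtaining
\begin{equation*}
\log G(e^{-\tau}) = \sum_{\substack{m\ge 1 \\ m\equiv 1\bmod 4}}\sum_{\ell\ge 1}\frac{e^{-\ell m\tau}}{\ell} + \sum_{\substack{m\ge 1 \\ m\equiv 3\bmod 4}}\sum_{\ell\ge 1}\frac{(-1)^\ell e^{-\ell m\tau}}{\ell}.
\end{equation*}
Inserting $e^{-u}=\tfrac{1}{2\pi i}\int_{(2)}\Gamma(s)u^{-s}\,ds$ and summing in $\ell,m$ (using $\sum_\ell (-1)^\ell \ell^{-s-1}=-(1-2^{-s})\zeta(s+1)$ and $\sum_{m\equiv a\bmod 4}m^{-s}=4^{-s}\zeta(s,a/4)$) yields
\begin{equation*}
\log G(e^{-\tau}) = \frac{1}{2\pi i}\int_{(2)}\Gamma(s)\tau^{-s}F(s)\,ds,\qquad F(s):=4^{-s}\zeta(s+1)\bigl[\zeta(s,\tfrac14)-(1-2^{-s})\zeta(s,\tfrac34)\bigr].
\end{equation*}

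\textbf{Shift of contour and residues.} I would then push the contour to $\Re(s)=-3/4$, which avoids the $\Gamma$-poles at $s=0,-1,-2,\ldots$. Inside the strip $-3/4<\Re(s)<2$ the integrand has a simple pole at $s=1$ (the bracket has residue $1-1/2=1/2$, since each Hurwitz zeta has residue $1$) and a double pole at $s=0$ coming from $\Gamma(s)\zeta(s+1)$ (the bracket itself is holomorphic there with value $1/4$). The residue at $s=1$ equals $\tau^{-1}\cdot\tfrac14\cdot\tfrac{\pi^2}{6}\cdot\tfrac12 = \pi^2/(48\tau)$. For $s=0$, the relevant Laurent expansions are $\Gamma(s)\zeta(s+1)=1/s^2+O(1)$, $4^{-s}\tau^{-s}=1-s\log(4\tau)+O(s^2)$, and
\begin{equation*}
\zeta(s,\tfrac14)-(1-2^{-s})\zeta(s,\tfrac34) = \tfrac14 + s\bigl(\zeta'(0,\tfrac14)+\tfrac{\log 2}{4}\bigr)+O(s^2).
\end{equation*}
Combined with Lerch's formula $\zeta'(0,\alpha)=\log\Gamma(\alpha)-\tfrac12\log(2\pi)$, the coefficient of $1/s$ extracts the residue $-\tfrac14\log\tau-\tfrac34\log 2-\tfrac12\log\pi+\log\Gamma(\tfrac14)$. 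Together with $\pi^2/(48\tau)$ this is exactly the main term in \eqref{eq:near:q=1}.

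\textbf{Error estimate and twin formula.} The remainder $E_+$ equals $\tfrac{1}{2\pi i}\int_{(-3/4)}\Gamma(s)\tau^{-s}F(s)\,ds$. On the line $\Re(s)=-3/4$ one has $|\tau^{-s}|=|\tau|^{3/4}e^{t\Arg\tau}$, and the hypothesis $|Y|\le 1/(2\pi X)$ gives $|\tau|\le\sqrt 2/X$ and $|\Arg\tau|\le\pi/4$; exponential Stirling decay of $\Gamma(-3/4+it)$ absorbs both $e^{t\Arg\tau}$ and the polynomial growth of $\zeta(1/4+it)$, while the Hurwitz zetas $\zeta(-3/4+it,\alpha)$ are controlled via their functional equation exactly as was done for $\Re(s)=-1/2$ just before Lemma~\ref{le:I+I-}. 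The resulting $t$-integral converges to an absolute constant, yielding $|E_+|\ll|\tau|^{3/4}\ll X^{-3/4}$. Equation \eqref{eq:near:q=-1} follows from an identical argument with $\tilde F(s):=4^{-s}\zeta(s+1)\bigl[\zeta(s,\tfrac34)-(1-2^{-s})\zeta(s,\tfrac14)\bigr]$: the residue at $s=1$ is unchanged, while the bracket at $s=0$ now equals $-1/4$, which flips the sign of the $\log\tau$ coefficient and substitutes $\Gamma(3/4)$ for $\Gamma(1/4)$.

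\textbf{Main obstacle.} The conceptual skeleton is standard; the genuine labour lies in the quantitative bookkeeping that replaces $\ll$ by the explicit constants $0.66$ and $0.82$, since these numerics feed directly into the downstream circle-method analysis proving Theorem~\ref{th:g(n)-asymp}. Extracting them requires fully explicit bounds on $\Gamma(-3/4+it)$, $\zeta(1/4+it)$, and $\zeta(-3/4+it,\alpha)$ in the same quantitative style as the Hurwitz zeta estimates of Section~3, followed by a careful numerical optimisation of the resulting convergent $t$-integral. The mild asymmetry between the two constants reflects the different placements of $\zeta(s,\tfrac14)$ and $\zeta(s,\tfrac34)$ inside the bracket $F$ versus $\tilde F$ along the shifted contour.
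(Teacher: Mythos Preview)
Your proposal is correct and follows the paper's proof essentially line for line: the same Mellin--Barnes representation with integrand $(4\tau)^{-s}\Gamma(s)\zeta(s+1)[\zeta(s,\tfrac14)-(1-2^{-s})\zeta(s,\tfrac34)]$, the same shift to $\Re(s)=-3/4$, the same residue computation at $s=0,1$ via Lerch's formula, and the same error bound via $|\Arg\tau|\le\pi/4$ and $|\tau|\le\sqrt2\,X^{-1}$. The paper likewise does not display the numerical integration that produces $0.507|\tau|^{3/4}$ (hence $0.66X^{-3/4}$), so your identification of that step as the only remaining labour is accurate.
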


\begin{remark}
	As pointed out by one of the referees, there are different approaches to finding these asymptotic behaviors. For instance, in the literature the Euler--Maclaurin summation formula is often used.
\end{remark}

\begin{proof}
	We deduce from \eqref{eq:G(q)} with the help of the inverse Mellin transform that
	\begin{align*}
	\log G(e^{-\tau})&=\sum_{m\ge 0}\sum_{\ell\ge 1}\Bigg(\frac{e^{-(4m+1)\ell\tau}}{\ell}+\frac{(-1)^\ell e^{-(4m+3)\ell\tau}}{\ell}\Bigg)\\
	&=\frac{1}{2\pi i}\int_{(\tfrac{3}{2})} \tau^{-s}\Gamma(s)\sum_{m\ge 0}\sum_{\ell\ge 1} \ell^{-s-1}\Bigg(\frac{1}{(4m+1)^s}+\frac{(-1)^\ell}{(4m+3)^s}\Bigg)ds\\
	&=\frac{1}{2\pi i}\int_{(\tfrac{3}{2})} (4\tau)^{-s}\Gamma(s)\zeta(s+1)\Big(\zrie{s,\tfrac{1}{4}}-(1-2^{-s})\zrie{s,\tfrac{3}{4}}\Big)ds\\
	&=:\frac{1}{2\pi i}\int_{(\tfrac{3}{2})} \Theta_+(s) ds.
	\end{align*}
	Now one may shift the path of integration to $(-\tfrac{3}{4})$ by taking into consideration of the residues of $\Theta_+(s)$ inside the strip $-\frac{3}{4}<\Re(s)<\frac{3}{2}$. Hence,
	\begin{align*}
	\log G(e^{-\tau}) = \sum_{-\frac{3}{4}<\Re(s)<\frac{3}{2}}\Res_s \Theta_+(s) +\frac{1}{2\pi i}\int_{(-\tfrac{3}{4})} \Theta_+(s) ds.
	\end{align*}
	Notice that $\Theta_+(s)$ has two singularities at $s=0$ and $1$, respectively, when $-\frac{3}{4}<\Re(s)<\frac{3}{2}$. We therefore compute that
	\begin{align*}
	\Res_{s=1} \Theta_+(s) = \frac{\pi^2}{48}\frac{1}{\tau},
	\end{align*}
	and that
	\begin{align*}
	\Res_{s=0} \Theta_+(s) &= -\frac{1}{4}\log(4\tau)+\zeta'\left(0,\tfrac{1}{4}\right)-(\log 2)\zrie{0,\tfrac{3}{4}}\\
	&=-\frac{1}{4}\log(4\tau)+\log \Gamma(\tfrac{1}{4})-\frac{1}{2}\log(2\pi)+\frac{1}{4}\log 2\\
	&=-\frac{1}{4}\log \tau-\frac{3}{4}\log 2-\frac{1}{2}\log \pi+\log \Gamma(\tfrac{1}{4}).
	\end{align*}
	Further, recalling that $\tau=X^{-1}+2\pi i Y$ where $|Y|\le \frac{1}{2\pi X}$, we have $|\Arg(\tau)|\le \frac{\pi}{4}$. Since for $\Re(s)=-\frac{3}{4}$,
	$$|\tau^{-s}|=\exp\left(\frac{3}{4}\log|\tau|+\Im(s)\Arg(\tau)\right)\le |\tau|^{\frac{3}{4}}e^{\frac{|\Im(s)|\pi}{4}},$$
	it follows that
	\begin{align*}
	|E_+|&=\left|\frac{1}{2\pi i}\int_{(-\tfrac{3}{4})} \Theta_+(s) ds\right|\\
	&=\left|\frac{1}{2\pi i}\int_{(-\tfrac{3}{4})} (4\tau)^{-s}\Gamma(s)\zeta(s+1)\Big(\zrie{s,\tfrac{1}{4}}-(1-2^{-s})\zrie{s,\tfrac{3}{4}}\Big) ds\right|\\
	&\le |\tau|^{\frac{3}{4}} \cdot \frac{1}{2\pi}\int_{-\infty}^\infty 4^{\frac{3}{4}} e^{\frac{|t|\pi}{4}}\left|\Gamma(-\tfrac{3}{4}+it)\right|\left|\zeta(\tfrac{1}{4}+it)\right|\\
	&\quad\times \Big(\left|\zrie{-\tfrac{3}{4}+it,\tfrac{1}{4}}\right|+(1+2^{\frac{3}{4}})\left|\zrie{-\tfrac{3}{4}+it,\tfrac{3}{4}}\right|\Big)dt\\
	&\le 0.507|\tau|^{\frac{3}{4}}.
	\end{align*}
	We also have
	$$|\tau|=\sqrt{X^{-2}+4\pi^2 Y^2}\le \sqrt{2}X^{-1}.$$
	Hence,
	$$|E_+|\le 0.66 X^{-\frac{3}{4}}.$$
	For $\log G(-e^{-\tau})$, we simply notice that
	\begin{align*}
	\log G(-e^{-\tau})=\frac{1}{2\pi i}\int_{(\tfrac{3}{2})} (4\tau)^{-s}\Gamma(s)\zeta(s+1)\Big(\zrie{s,\tfrac{3}{4}}-(1-2^{-s})\zrie{s,\tfrac{1}{4}}\Big)ds.
	\end{align*}
	The rest follows from similar calculations.
\end{proof}

\section{Applying the circle method}\label{sec:cir}

The proof of Theorem \ref{th:g(n)-asymp} is a simple exercise of the circle method. Let us first put
\begin{equation}
X=\sqrt{\frac{48 n}{\pi^2}}.
\end{equation}
Since it is assumed that $X\ge 3.4\times 10^{7}$ as in Theorem \ref{th:bound-away}, we get
\begin{equation}
n\ge2.4\times 10^{14}.
\end{equation}
Now applying Cauchy's integral formula gives
\begin{align}\label{eq:Cauchy}
g(n)&=\frac{1}{2\pi i}\int_{|q|=e^{-\frac{1}{X}}}\frac{G(q)}{q^{n+1}}dq\notag\\
&=e^{\frac{n}{X}}\int_{-\frac{1}{2\pi X}}^{1-\frac{1}{2\pi X}}G\big(e^{-(X^{-1}+2\pi i t)}\big)e^{2\pi i nt}\; dt.
\end{align}
From now on, we separate the interval $[-\frac{1}{2\pi X},\;1-\frac{1}{2\pi X}]$ into three disjoint subintervals:
\begin{align*}
I_1&:=\left[-\frac{1}{2\pi X},\;\frac{1}{2\pi X}\right],\\
I_2&:=\left[\frac{1}{2}-\frac{1}{2\pi X},\;\frac{1}{2}+\frac{1}{2\pi X}\right],\\
I_3&:=\left[-\frac{1}{2\pi X},\;1-\frac{1}{2\pi X}\right]-I_1-I_2.
\end{align*}
Before evaluating \eqref{eq:Cauchy} for each subinterval, we fix the notation that $\mfO(x)$ means an expression $E$ such that $|E|\le x$. We also write for $j=1,2,3$,
$$g_j(n):=e^{\frac{n}{X}}\int_{I_j} G\big(e^{-(X^{-1}+2\pi i t)}\big)e^{2\pi i nt}\; dt.$$

Let us begin with the evaluation of $g_1(n)$. Here,
\begin{align*}
g_1(n)&=e^{\frac{n}{X}}\int_{-\frac{1}{2\pi X}}^{\frac{1}{2\pi X}} G\big(e^{-(X^{-1}+2\pi i t)}\big)e^{2\pi i nt}\; dt\\
&=\frac{1}{2\pi i}\int_{\frac{1}{X}-i\frac{1}{X}}^{\frac{1}{X}+i\frac{1}{X}} e^{n\tau} G(e^{-\tau})\; d\tau.
\end{align*}
Notice that for $|x|\le 1$,
$$e^x = 1+\mfO(2|x|).$$
Applying \eqref{eq:near:q=1} yields
\begin{align}\label{eq:g1(n)-1}
g_1(n)=\frac{\big(1+\mfO(1.32X^{-\frac{3}{4}})\big)\Gamma(\tfrac{1}{4})}{2^{\frac{3}{4}}\pi^{\frac{1}{2}}}\frac{1}{2\pi i}\int_{\frac{1}{X}-i\frac{1}{X}}^{\frac{1}{X}+i\frac{1}{X}} \tau^{-\frac{1}{4}} \exp\left(\frac{\pi^2}{48}\frac{1}{\tau}+n\tau\right)\; d\tau.
\end{align}
We then separate the integral as
\begin{align*}
&\frac{1}{2\pi i}\int_{\frac{1}{X}-i\frac{1}{X}}^{\frac{1}{X}+i\frac{1}{X}} \tau^{-\frac{1}{4}} \exp\left(\frac{\pi^2}{48}\frac{1}{\tau}+n\tau\right)\; d\tau\\
&\quad = \frac{1}{2\pi i}\Bigg(\int_{\Gamma}-\int_{-\infty-i\frac{1}{X}}^{\frac{1}{X}-i\frac{1}{X}}+\int_{-\infty+i\frac{1}{X}}^{\frac{1}{X}+i\frac{1}{X}}\Bigg) \tau^{-\frac{1}{4}} \exp\left(\frac{\pi^2}{48}\frac{1}{\tau}+n\tau\right)\; d\tau\\
&\quad=: J_{11}+J_{12}+J_{13},
\end{align*}
where
\begin{align}\label{eq:Hankel contour}
\Gamma:=(-\infty-iX^{-1}) \to (X^{-1}-iX^{-1}) \to (X^{-1}+iX^{-1}) \to (-\infty+iX^{-1})
\end{align}
is a Hankel contour. To evaluate $J_{11}$, we make the change of variables
$$\tau=\sqrt{\frac{\pi^2}{48n}}w.$$
Then
\begin{align*}
J_{11}&=\left(\frac{\pi^2}{48n}\right)^{\frac{3}{8}}\frac{1}{2\pi i}\int_{\widetilde{\Gamma}} w^{-\frac{1}{4}}\exp\left(\sqrt{\frac{\pi^2 n}{48}}\left(\frac{1}{w}+w\right)\right)\; dw,
\end{align*}
where $\widetilde{\Gamma}$ is the new contour. Recalling the contour integral representation of $I_s(x)$:
$$I_s(x)=\frac{1}{2\pi i}\int_{\widetilde{\Gamma}} w^{-s-1}e^{\frac{x}{2}\left(w+\frac{1}{w}\right)}\ dw,$$
we conclude that
$$J_{11}=\frac{\pi^{\frac{3}{4}}}{2^{\frac{3}{2}}3^{\frac{3}{8}}n^{\frac{3}{8}}}I_{-\frac{3}{4}}\left(\frac{\pi}{2}\sqrt{\frac{n}{3}}\right).$$
To bound $J_{12}$, we put $\tau=x-i X^{-1}$. Then
\begin{align*}
J_{12}=\frac{1}{2\pi i}\int_{-\infty}^{X^{-1}} \tau^{-\frac{1}{4}} \exp\left(\frac{\pi^2}{48}\frac{1}{\tau}+n\tau\right)\; dx.
\end{align*}
Since $|\tau|\ge X^{-1}$, we have
$$|\tau|^{-\frac{1}{4}}\le X^{\frac{1}{4}}.$$
Also,
$$|e^{n\tau}|=e^{nx}.$$
Further,
$$\left|e^{\frac{\pi^2}{48}\frac{1}{\tau}}\right|=e^{\frac{\pi^2}{48}\frac{x}{x^2+X^{-2}}}\le e^{\frac{\pi^2}{96}X}.$$
Therefore,
\begin{align*}
|J_{12}|&\le \frac{1}{2\pi}\cdot X^{\frac{1}{4}}e^{\frac{\pi^2}{96}X}\int_{-\infty}^{X^{-1}} e^{nx}\; dx\\
&=\frac{1}{2\pi}\cdot X^{\frac{1}{4}}e^{\frac{\pi^2}{96}X}\cdot \frac{1}{n}e^{\frac{n}{X}}\\
&=\frac{3^{\frac{1}{8}}}{2^{\frac{1}{2}}\pi^{\frac{5}{4}}n^{\frac{7}{8}}}\exp\left(\frac{3\pi}{8}\sqrt{\frac{n}{3}}\right).
\end{align*}
One may carry out a similar argument and obtain that
$$|J_{13}|\le \frac{3^{\frac{1}{8}}}{2^{\frac{1}{2}}\pi^{\frac{5}{4}}n^{\frac{7}{8}}}\exp\left(\frac{3\pi}{8}\sqrt{\frac{n}{3}}\right).$$
Consequently,
\begin{align*}
\frac{1}{2\pi i}\int_{\frac{1}{X}-i\frac{1}{X}}^{\frac{1}{X}+i\frac{1}{X}} \tau^{-\frac{1}{4}} \exp\left(\frac{\pi^2}{48}\frac{1}{\tau}+n\tau\right)\; d\tau &= \frac{\pi^{\frac{3}{4}}}{2^{\frac{3}{2}}3^{\frac{3}{8}}n^{\frac{3}{8}}}I_{-\frac{3}{4}}\left(\frac{\pi}{2}\sqrt{\frac{n}{3}}\right)\\
&\quad+\mfO\Bigg(\frac{2^{\frac{1}{2}} 3^{\frac{1}{8}}}{\pi^{\frac{5}{4}}n^{\frac{7}{8}}}\exp\left(\frac{3\pi}{8}\sqrt{\frac{n}{3}}\right)\Bigg).
\end{align*}
Recalling \eqref{eq:g1(n)-1}, we have
\begin{align}\label{eq:g1(n)}
g_1(n)=\frac{\pi^{\frac{1}{4}}\Gamma(\tfrac{1}{4})}{2^{\frac{9}{4}}3^{\frac{3}{8}}n^{\frac{3}{8}}}I_{-\frac{3}{4}}\left(\frac{\pi}{2}\sqrt{\frac{n}{3}}\right)+E_{g_1},
\end{align}
where
\begin{align}\label{eq:E_g1}
|E_{g_1}|&\le \frac{\Gamma(\tfrac{1}{4})}{2^{\frac{3}{4}}\pi^{\frac{1}{2}}}\Bigg(\frac{1.32\pi^{\frac{3}{2}}}{2^{3}3^{\frac{3}{4}}n^{\frac{3}{4}}}I_{-\frac{3}{4}}\left(\frac{\pi}{2}\sqrt{\frac{n}{3}}\right)+\left(1+\frac{1.32\pi^{\frac{3}{4}}}{2^{\frac{3}{2}}3^{\frac{3}{8}}n^{\frac{3}{8}}}\right)\frac{2^{\frac{1}{2}} 3^{\frac{1}{8}}}{\pi^{\frac{5}{4}}n^{\frac{7}{8}}}\exp\left(\frac{3\pi}{8}\sqrt{\frac{n}{3}}\right)\Bigg)\notag\\
&\ll n^{-\frac{3}{4}}I_{-\frac{3}{4}}\left(\frac{\pi}{2}\sqrt{\frac{n}{3}}\right).
\end{align}

For $g_2(n)$, we have
\begin{align*}
g_2(n)&=(-1)^n e^{\frac{n}{X}}\int_{-\frac{1}{2\pi X}}^{\frac{1}{2\pi X}} G\big({-e}^{-(X^{-1}+2\pi i t)}\big)e^{2\pi i nt}\; dt\\
&=\frac{(-1)^n}{2\pi i}\int_{\frac{1}{X}-i\frac{1}{X}}^{\frac{1}{X}+i\frac{1}{X}} e^{n\tau} G({-e}^{-\tau})\; d\tau.
\end{align*}
It follows from \eqref{eq:near:q=-1} that
\begin{align}\label{eq:g2(n)-1}
g_2(n)=(-1)^n \frac{\big(1+\mfO(1.64X^{-\frac{3}{4}})\big)\Gamma(\tfrac{3}{4})}{2^{\frac{1}{4}}\pi^{\frac{1}{2}}}\frac{1}{2\pi i}\int_{\frac{1}{X}-i\frac{1}{X}}^{\frac{1}{X}+i\frac{1}{X}} \tau^{\frac{1}{4}} \exp\left(\frac{\pi^2}{48}\frac{1}{\tau}+n\tau\right)\; d\tau.
\end{align}
Similarly, we separate the integral as
\begin{align*}
&\frac{1}{2\pi i}\int_{\frac{1}{X}-i\frac{1}{X}}^{\frac{1}{X}+i\frac{1}{X}} \tau^{\frac{1}{4}} \exp\left(\frac{\pi^2}{48}\frac{1}{\tau}+n\tau\right)\; d\tau\\
&\quad = \frac{1}{2\pi i}\Bigg(\int_{\Gamma}-\int_{-\infty-i\frac{1}{X}}^{\frac{1}{X}-i\frac{1}{X}}+\int_{-\infty+i\frac{1}{X}}^{\frac{1}{X}+i\frac{1}{X}}\Bigg) \tau^{\frac{1}{4}} \exp\left(\frac{\pi^2}{48}\frac{1}{\tau}+n\tau\right)\; d\tau\\
&\quad=: J_{21}+J_{22}+J_{23},
\end{align*}
where the Hankel contour $\Gamma$ is as in \eqref{eq:Hankel contour}. In the same vein, one may compute that
$$J_{21}=\frac{\pi^{\frac{5}{4}}}{2^{\frac{5}{2}}3^{\frac{5}{8}}n^{\frac{5}{8}}}I_{-\frac{5}{4}}\left(\frac{\pi}{2}\sqrt{\frac{n}{3}}\right).$$
To bound $J_{22}$, we still put $\tau=x-i X^{-1}$. Noticing that
$$|\tau|^{\frac{1}{4}}=(x^2+X^{-2})^{\frac{1}{8}}\le |x|^{\frac{1}{4}}+X^{-\frac{1}{4}},$$
we have
\begin{align*}
|J_{22}|&\le \frac{1}{2\pi}\cdot e^{\frac{\pi^2}{96}X}\int_{-\infty}^{X^{-1}} e^{nx}\big(|x|^{\frac{1}{4}}+X^{-\frac{1}{4}}\big)\; dx\\
&\le \frac{1}{2\pi}\cdot e^{\frac{\pi^2}{96}X}\int_{-\infty}^{0} e^{nx}(-x)^{\frac{1}{4}}\; dx+\frac{1}{2\pi}\cdot e^{\frac{\pi^2}{96}X}\int_{-\infty}^{X^{-1}} e^{nx}\cdot 2X^{-\frac{1}{4}}\; dx\\
&=\frac{\Gamma(\tfrac{5}{4})}{2 \pi n^{\frac{5}{4}}}\exp\left(\frac{\pi}{8}\sqrt{\frac{n}{3}}\right)+\frac{1}{2^{\frac{1}{2}}3^{\frac{1}{8}}\pi^{\frac{3}{4}}n^{\frac{9}{8}}}\exp\left(\frac{3\pi}{8}\sqrt{\frac{n}{3}}\right).
\end{align*}
Likewise,
$$|J_{23}|\le \frac{\Gamma(\tfrac{5}{4})}{2 \pi n^{\frac{5}{4}}}\exp\left(\frac{\pi}{8}\sqrt{\frac{n}{3}}\right)+\frac{1}{2^{\frac{1}{2}}3^{\frac{1}{8}}\pi^{\frac{3}{4}}n^{\frac{9}{8}}}\exp\left(\frac{3\pi}{8}\sqrt{\frac{n}{3}}\right).$$
In consequence,
\begin{align}\label{eq:g2(n)}
g_2(n)=(-1)^n\frac{\pi^{\frac{3}{4}}\Gamma(\tfrac{3}{4})}{2^{\frac{11}{4}}3^{\frac{5}{8}}n^{\frac{5}{8}}}I_{-\frac{5}{4}}\left(\frac{\pi}{2}\sqrt{\frac{n}{3}}\right)+E_{g_2},
\end{align}
where
\begin{align}
|E_{g_2}|&\le \frac{\Gamma(\tfrac{3}{4})}{2^{\frac{1}{4}}\pi^{\frac{1}{2}}}\Bigg(\frac{1.64\pi^{2}}{2^{4}3^1 n}I_{-\frac{5}{4}}\left(\frac{\pi}{2}\sqrt{\frac{n}{3}}\right)\notag\\
&\quad+2\left(1+\frac{1.64\pi^{\frac{3}{4}}}{2^{\frac{3}{2}}3^{\frac{3}{8}}n^{\frac{3}{8}}}\right) \left(\frac{\Gamma(\tfrac{5}{4})}{2 \pi n^{\frac{5}{4}}}\exp\left(\frac{\pi}{8}\sqrt{\frac{n}{3}}\right)+\frac{1}{2^{\frac{1}{2}}3^{\frac{1}{8}}\pi^{\frac{3}{4}}n^{\frac{9}{8}}}\exp\left(\frac{3\pi}{8}\sqrt{\frac{n}{3}}\right)\right)\Bigg)\notag\\
&\ll n^{-1}I_{-\frac{5}{4}}\left(\frac{\pi}{2}\sqrt{\frac{n}{3}}\right).
\end{align}

\begin{remark}
It is necessary to point out that $g_2(n)$ has an absolute size of
$$\textsf{constant}\times n^{-\frac{5}{8}}I_{-\frac{5}{4}}\left(\frac{\pi}{2}\sqrt{\frac{n}{3}}\right),$$
while from \eqref{eq:E_g1},
$$E_{g_1}\ll n^{-\frac{3}{4}}I_{-\frac{3}{4}}\left(\frac{\pi}{2}\sqrt{\frac{n}{3}}\right).$$
Since the two $I$-Bessel functions have the same order, we conclude that $E_{g_1}$ is negligible in comparison with $g_2(n)$.
\end{remark}

Finally,
\begin{align*}
g_3(n)=e^{\frac{n}{X}}\int_{I_3} G\big(e^{-(X^{-1}+2\pi i t)}\big)e^{2\pi i nt}\; dt.
\end{align*}
Hence, by Theorem \ref{th:bound-away},
\begin{align*}
|g_3(n)|&\le e^{\frac{n}{X}}\int_{I_3} \exp\Bigg(\left(\frac{\pi^2}{48}-\frac{1}{100}\right)X\Bigg)\; dt\\
&\le \exp\Bigg(\frac{n}{X}+\left(\frac{\pi^2}{48}-\frac{1}{100}\right)X\Bigg).
\end{align*}
Namely,
\begin{align}\label{eq:g3(n)}
|g_3(n)|&\le \exp\left(\frac{\pi}{2}\sqrt{\frac{n}{3}}-\frac{\sqrt{3n}}{25\pi}\right).
\end{align}

The asymptotic formula \eqref{eq:g(n)-asymp} follows from \eqref{eq:g1(n)}, \eqref{eq:g2(n)} and \eqref{eq:g3(n)}. Further, a simple calculation reveals that when $n\ge2.4\times 10^{14}$, the sign of $g(n)$ depends only on the leading term
$$\frac{\pi^{\frac{1}{4}}\Gamma(\tfrac{1}{4})}{2^{\frac{9}{4}}3^{\frac{3}{8}}n^{\frac{3}{8}}}I_{-\frac{3}{4}}\left(\frac{\pi}{2}\sqrt{\frac{n}{3}}\right),$$
which is of course positive.

\subsection*{Acknowledgements}

I would like to express my gratitude to Ae Ja Yee for introducing Conjecture \ref{conj-sy} at the Combinatorics/Partitions Seminar of Penn State, and to George Andrews for our fruitful weekly discussions. Thanks also go to the referees whose detailed comments were extremely helpful in improving the exposition of this paper.

\bibliographystyle{amsplain}

\end{document}